\newcommand{\IC}{\mathbb C}
\newcommand{\IR}{\mathbb R}
\newcommand{\IZ}{\mathbb Z}
\renewcommand{\H}{\mathcal H}
\newcommand{\J}{\mathcal J}
\renewcommand{\L}{\mathcal L}
\newcommand{\M}{\mathcal M}
\newcommand{\N}{\mathcal N}
\renewcommand{\phi}{\varphi}
\newcommand{\id}{\mathrm{id}}
\newcommand{\abs}[1]{\lvert#1\rvert}
\newcommand{\norm}[1]{\lVert#1\rVert}
\newtheorem{theorem}{Theorem}[section]
\newtheorem{proposition}[theorem]{Proposition}
\newtheorem{corollary}[theorem]{Corollary}
\newtheorem{lemma}[theorem]{Lemma}
\theoremstyle{remark}
\newtheorem{remark}[theorem]{Remark}
\newtheorem{example}[theorem]{Example}
\title[Symmetric Christensen--Evans theorem]{Christensen--Evans theorem and extensions of GNS-symmetric quantum Markov semigroups}
\author{Melchior Wirth}
\address{Institute of Science and Technology Austria (ISTA), Am Campus 1, 3400 Klosterneuburg, \textsf{Austria}}
\email{melchior.wirth@ist.ac.at}
\begin{document}

\begin{abstract}
In this note we prove a refined version of the Christensen--Evans theorem for generators of uniformly continuous GNS-symmetric quantum Markov semigroups. We use this result to show the existence of GNS-symmetric extensions of GNS-symmetric quantum Markov semigroups. In particular, this implies that the generators of GNS-symmetric quantum Markov semigroups on finite-dimensional von Neumann algebra can be written in the form specified by Alicki's theorem.
\end{abstract}

\maketitle

\section{Introduction}

Quantum Markov semigroups were originally introduced in mathematical physics to model the time evolution of certain open quantum systems and have since developed into an object of broad mathematical interest in pure and applied disciplines. Particular research effort has been spent on understanding the structure of their generators. For quantum Markov semigroups acting on matrix algebras, the results by Lindblad \cite{Lin76} and Gorini--Kossakowski--Sudarshan \cite{GKS76} give a full description of the generators, which are now commonly called Lindblad operators. In the broader context of uniformly continuous quantum Markov semigroups on von Neumann algebras, Christensen and Evans \cite{CE79} showed that the generator is of the form
\begin{equation*}
\L(x)=k^\ast x+xk-\Phi(x)
\end{equation*}
with a completely positive map $\Phi$.

In many situations however, the suitable category is not that of von Neumann algebras, but that of $W^\ast$-probability spaces, that is, von Neumann algebras with a fixed normal faithful state. In this situation, one is often interested in quantum Markov semigroups that satisfy a symmetry condition with respect to the reference state, such as GNS- or KMS-symmetry. Again, these conditions have a physical motivation in the description of open quantum systems whose environment is in thermal equilibrium. The structure of the generators of GNS- and KMS-symmetric quantum Markov semigroups on matrix algebras (and more generally the full algebra of bounded operators on a Hilbert space) has been intensively studied, too. In particular, Alicki's theorem \cite{Ali76} gives a full description of the generators of a GNS-symmetric quantum Markov semigroups on matrix algebras in similar form to the Lindblad--Gorini--Kossakowski--Sudarshan theorem. However, an analog of the Christensen--Evans theorem for GNS-symmetric quantum Markov semigroups on arbitrary von Neumann algebras seems to be missing so far.

Recent years have seen a renewed interest in the structure of the generators of quantum Markov semigroups in the study of noncommutative functional inequalities with applications in quantum information. In particular, Alicki's theorem has been instrumental in the definition of a noncommutative analog of the $2$-Wasserstein distance by Carlen and Maas \cite{CM17a}; and the differential calculus developed there has found applications beyond \cite{Bar17,GR21}. But to use Alicki's theorem for a GNS-symmetric quantum Markov semigroup on a finite-dimensional von Neumann algebra, it has to be assumed that it has a GNS-symmetric extensions to the full matrix algebra. While this can be shown for many examples of interest, the question whether this is always the case has been left open.

In this article we show that the generator of a uniformly continuous GNS-symmetric quantum Markov semigroup has a Christensen--Evans form in which the operator $k$ and the completely positive map $\Phi$ are compatible with GNS-symmetry, namely $k$ is in the centralizer of the state (and positive) and $\Phi$ is GNS-symmetric (\Cref{thm:CE_symmetric}).
 
Our proof relies on the theory of von Neumann bimodules and correspondences. We show that if a uniformly continuous quantum Markov semigroup is GNS-symmetric, the associated GNS bimodule carries some extra structure (\Cref{prop:Tomita_bimodule}). This can be seen as a first step in extending the first-order differential calculus developed by Cipriani and Sauvageot \cite{CS03} to quantum Markov semigroups that are GNS-symmetric with respect to a non-tracial state (or weight). A crucial step in the proof of the original Christensen--Evans theorem is that a derivation associated with the quantum Markov semigroup is inner. Likewise it will be central in our proof that in the case of a GNS-symmetric quantum Markov semigroup, the vector representing this derivation can be chosen compatible with the extra structure mentioned above.
 
As an application we show that a uniformly continuous GNS-symmetric quantum Markov semigroup on a von Neumann subalgebra can be extended to a GNS-symmetric quantum Markov semigroup on the full von Neumann algebra provided the subalgebra is the range of a normal conditional expectation (\Cref{thm:extension}). In particular, Alicki's theorem can be applied to any GNS-symmetric quantum Markov semigroup on a unital $\ast$-subalgebra of a matrix algebra (\Cref{cor:Alicki}). In the appendix we give a short proof of the fact (originally due to Kadison and Ringrose \cite{KR71a,KR71b}) that every approximately finite-dimensional von Neumann algebra with separable predual is amenable, which might be of independent interest.

\subsection*{Acknowledgments} The author wants to thank Jan Maas for raising the question of extending GNS-symmetric quantum Markov semigroups and Michael Skeide for introducing him to his work on von Neumann modules. The author acknowledges financial support from the European Research Council (ERC) under
the European Union’s Horizon 2020 research and innovation programme (grant agreement No 716117).

\section{GNS-symmetric maps}

In this section we recall some definitions and prove two elementary facts about GNS-symmetric completely positive maps.

Let $\M$ be a von Neumann algebra and $\phi$ a normal faithful state on $\M$. We write $(\pi_\phi,L_2(\M,\phi),\Omega_\phi)$ for the cyclic representation associated with $\phi$ and $\sigma^\phi$ for the modular group. Let $\M_{\sigma^\phi}$ denote the set of analytic elements for $\sigma^\phi$, that is, the set of all $x\in \M$ for which $t\mapsto \sigma^\phi_t(x)$ has an analytic continuation to $\IC$. This continuation is denoted by $z\mapsto \sigma^\phi_z(x)$. Recall that $\M_{\sigma^\phi}$ is a weak$^\ast$ dense $\ast$-subalgebra of $\M$.

A bounded linear operator $P\colon \M\to\M$ is called \emph{GNS-symmetric} with respect to $\phi$ (or simply \emph{$\phi$-symmetric}) if
\begin{equation*}
\phi(P(x)^\ast y)=\phi(x^\ast P(y))
\end{equation*}
for all $x,y\in \M$.

The main object of interest of this article are GNS-symmetric quantum Markov semigroups, that is, weakly continuous semigroups of GNS symmetric unital completely positive maps on $\M$. It is customary to additionally assume that the individual maps are normal and commute with the modular group, but we will show that these two requirements are automatically satisfied.

For normality this is certainly known to experts, but as we could not find a precise reference, we give the short proof for the convenience of the reader.
\begin{lemma}\label{lem:GNS_normal}
Every GNS-symmetric unital completely positive map is normal.
\end{lemma}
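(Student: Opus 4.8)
The plan is to reduce the statement to the order-theoretic characterization of normality: a positive linear map between von Neumann algebras is normal if and only if it preserves the supremum of every bounded increasing net of positive elements. Since $\phi$ is normal and faithful, the GNS representation $\pi_\phi$ is a normal faithful $\ast$-isomorphism onto a von Neumann algebra acting on $L_2(\M,\phi)$; replacing $\M$ by $\pi_\phi(\M)$, we may assume that $\M$ acts on $L_2(\M,\phi)$ with cyclic and separating vector $\Omega_\phi$. It then suffices to show that $P(x_i)\uparrow P(x)$ whenever $(x_i)$ is a bounded increasing net of positive elements of $\M$ with supremum $x$.

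The first step is to produce an $L_2$-extension of $P$. Putting $1$ in the first slot of the GNS-symmetry relation and using $P(1)=1$ gives $\phi(P(x))=\phi(x)$ for all $x\in\M$. Combining this with the Kadison--Schwarz inequality $P(x)^\ast P(x)\le P(x^\ast x)$, which holds because $P$ is unital and completely positive, we obtain
\begin{equation*}
\phi(P(x)^\ast P(x))\le\phi(P(x^\ast x))=\phi(x^\ast x),\qquad x\in\M.
\end{equation*}
Hence $x\Omega_\phi\mapsto P(x)\Omega_\phi$ is well defined and extends to a linear contraction $\hat P$ on $L_2(\M,\phi)$.

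Now let $(x_i)$ be a bounded increasing net of positive elements of $\M$ with supremum $x$. Since $0\le x-x_i\le x$ we have $(x-x_i)^2\le\norm{x}(x-x_i)$, so normality of $\phi$ gives
\begin{equation*}
\norm{(x-x_i)\Omega_\phi}^2=\phi((x-x_i)^2)\le\norm{x}\,\phi(x-x_i)\longrightarrow 0,
\end{equation*}
and therefore $P(x_i)\Omega_\phi=\hat P(x_i\Omega_\phi)\to\hat P(x\Omega_\phi)=P(x)\Omega_\phi$ in $L_2(\M,\phi)$. On the other hand, since $P$ is positive, $(P(x_i))$ is a bounded increasing net of positive elements of $\M$; by the monotone convergence theorem for self-adjoint operators it converges strongly to $y:=\sup_i P(x_i)\in\M$, so in particular $P(x_i)\Omega_\phi\to y\Omega_\phi$. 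Comparing the two limits yields $y\Omega_\phi=P(x)\Omega_\phi$, and since $\Omega_\phi$ is separating we conclude $y=P(x)$. Thus $P$ preserves suprema of bounded increasing nets and is therefore normal.

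The argument involves no essential obstacle; the two places deserving a little care are the passage to the GNS representation, which lets us exploit the cyclic and separating vector while preserving normality, and the final matching of the strong limit of $(P(x_i))$ inside $\M$ with the $L_2$-limit furnished by $\hat P$ --- this is exactly where faithfulness of $\phi$ and the $L_2$-contractivity extracted from GNS-symmetry enter.
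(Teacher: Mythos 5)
Your proof is correct, and it rests on exactly the same key estimate as the paper's: GNS-symmetry applied with $1$ in one slot gives $\phi\circ P=\phi$, which combined with Kadison--Schwarz yields $\norm{P(x)}_\phi\leq\norm{x}_\phi$, i.e.\ the $L_2$-contraction $\hat P$. Where you diverge is in how this contractivity is converted into normality. The paper invokes two standard facts as black boxes: that a completely positive map is normal iff it is strongly continuous on the unit ball, and that on the unit ball the strong topology agrees with the $\norm{\cdot}_\phi$-topology; the conclusion is then immediate. You instead use the order-theoretic characterization (preservation of suprema of bounded increasing nets) and verify it by hand: $x_i\Omega_\phi\to x\Omega_\phi$ in $L_2$ by normality of $\phi$ and the inequality $(x-x_i)^2\leq\norm{x}(x-x_i)$, Vigier's theorem identifies the strong limit of $P(x_i)$ with $\sup_i P(x_i)\in\M$, and the separating property of $\Omega_\phi$ matches the two limits. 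Your route is longer but more self-contained, trading the topological citations for an elementary monotone-net argument; the paper's version is shorter at the cost of leaning on the cited equivalences. Both are complete proofs, and all the small points you need (well-definedness of $\hat P$ via the separating vector, membership of the supremum in $\M$) are handled correctly.
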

\begin{proof}
Let $P$ be a $\phi$-symmetric unital completely positive map. By \cite[Proposition III.2.2.2]{Bla06} it suffices to show that $P$ is strongly continuous on the unit ball of $\M$. By \cite[Proposition III.5.3]{Tak02}, on the unit ball the strong topology coincides with the topology induced by the norm
\begin{equation*}
\norm{\cdot}_\phi\colon\M\to[0,\infty),\,\norm{x}_\phi=\phi(x^\ast x)^{1/2}.
\end{equation*}
By the Kadison--Schwarz inequality and $\phi$-symmetry,
\begin{equation*}
\norm{P(x)}_\phi=\phi(P(x)^\ast P(x))^{1/2}\leq \phi(P(x^\ast x))^{1/2}=\phi(x^\ast x)^{1/2}=\norm{x}_\phi.
\end{equation*}
Hence $P$ is continuous with respect to $\norm{\cdot}_\phi$.
\end{proof}

That GNS-symmetric unital completely positive maps commute with the modular group is also known for matrix algebras \cite[Lemma 2]{Ali76}, \cite[Lemma 2.5]{CM17a}. Our proof in the infinite-dimensional case follows the same strategy, just that some care has to be taken of the domains.
\begin{proposition}\label{prop:mod_group_commute}
If $P$ is a $\phi$-symmetric unital completely positive map, then $P$ commutes with the modular group $\sigma^\phi$.
\end{proposition}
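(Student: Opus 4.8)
The plan is to transfer the statement to the GNS Hilbert space $L_2(\M,\phi)$, where it becomes the assertion that the operator induced by $P$ commutes with the modular operator $\Delta_\phi$.

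First I would note that putting $x=1$ in the symmetry relation and using $P(1)=1$ gives $\phi\circ P=\phi$; together with the Kadison--Schwarz inequality this shows that $x\Omega_\phi\mapsto P(x)\Omega_\phi$ extends to a bounded linear operator $\hat P$ on $L_2(\M,\phi)$ with $\norm{\hat P}\leq 1$. The defining property of $\phi$-symmetry is exactly that $\hat P$ is symmetric, so, being bounded and everywhere defined, $\hat P$ is self-adjoint. Since $P$ is moreover $\ast$-preserving, $\hat P$ commutes with $x\Omega_\phi\mapsto x^\ast\Omega_\phi$ on the core $\M\Omega_\phi$, and because the closure $S_\phi=J_\phi\Delta_\phi^{1/2}$ of this map is closed one obtains the inclusion $\hat P S_\phi\subseteq S_\phi\hat P$.

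Next I would pass to adjoints. As $\hat P$ is bounded, everywhere defined and self-adjoint, the inclusion $\hat P S_\phi\subseteq S_\phi\hat P$ yields $\hat P S_\phi^\ast\subseteq S_\phi^\ast\hat P$. Feeding the two inclusions into the definition $\Delta_\phi=S_\phi^\ast S_\phi$ then shows that $\hat P$ maps $D(\Delta_\phi)$ into itself and commutes with $\Delta_\phi$ there, i.e.\ $\hat P\Delta_\phi\subseteq\Delta_\phi\hat P$. A bounded operator with this property commutes with every resolvent $(\Delta_\phi-\lambda)^{-1}$, $\lambda\in\IC\setminus\IR$, hence with the whole bounded Borel functional calculus of $\Delta_\phi$, in particular with $\Delta_\phi^{it}$ for all $t\in\IR$.

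Finally I would invoke Tomita--Takesaki theory: $\Delta_\phi^{it}$ fixes $\Omega_\phi$ and implements the modular group, so $\Delta_\phi^{it}(x\Omega_\phi)=\sigma_t^\phi(x)\Omega_\phi$ for $x\in\M$. Combining this with $\hat P\Delta_\phi^{it}=\Delta_\phi^{it}\hat P$ gives
\[
\sigma_t^\phi(P(x))\Omega_\phi=\Delta_\phi^{it}\hat P(x\Omega_\phi)=\hat P\Delta_\phi^{it}(x\Omega_\phi)=P(\sigma_t^\phi(x))\Omega_\phi
\]
for all $x\in\M$ and $t\in\IR$, and since $\Omega_\phi$ is separating this forces $\sigma_t^\phi\circ P=P\circ\sigma_t^\phi$. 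The conceptual strategy here is the standard one; the only genuinely delicate points lie in the second step, namely justifying $\hat P S_\phi^\ast\subseteq S_\phi^\ast\hat P$ with the correct conventions for adjoints of conjugate-linear operators and, above all, upgrading a commutation relation on a dense subspace to the honest inclusion $\hat P\Delta_\phi\subseteq\Delta_\phi\hat P$ for the unbounded operator $\Delta_\phi$. This is precisely the care with the domains alluded to above; everything else is soft.
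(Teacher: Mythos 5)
Your proof is correct, and it follows the same overall strategy as the paper (pass to $L_2(\M,\phi)$, show the induced self-adjoint contraction $\hat P$ commutes with $S_\phi$, deduce commutation with $\Delta_\phi$ and hence with $\Delta_\phi^{it}$, then use that $\Omega_\phi$ is separating). The route through the key middle step is genuinely different, though. The paper stays at the level of sesquilinear forms: it works on the Tomita algebra $\mathfrak{A}_0$, manipulates $\langle\tilde P\Delta\xi,\eta\rangle$ using the relations among $S$, $J$, $\Delta$, and then invokes twice that $\mathfrak{A}_0$ is a core for $\Delta$ (a nontrivial input from \cite[Theorem VI.2.2]{Tak03}). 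You instead argue operator-theoretically: from $\hat P S_\phi\subseteq S_\phi\hat P$ you pass to adjoints to get $\hat P S_\phi^\ast\subseteq S_\phi^\ast\hat P$, compose to get $\hat P\Delta_\phi\subseteq\Delta_\phi\hat P$ via $\Delta_\phi=S_\phi^\ast S_\phi$, and finish with resolvents and the bounded functional calculus. Your version only needs $\M\Omega_\phi$ to be a core for $S_\phi$ (which holds by definition of $S_\phi$) rather than the core property of the Tomita algebra for $\Delta$, so it is somewhat more self-contained; the price is the bookkeeping with adjoints of conjugate-linear operators and the core-to-inclusion upgrade, both of which you correctly identify as the delicate points and both of which go through by standard arguments. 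Either proof is complete and acceptable.
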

\begin{proof}
As shown in the proof of the previous lemma, the map
\begin{equation*}
\M\Omega_\phi\to L_2(\M,\phi),\,x\Omega_\phi\mapsto P(x)\Omega_\phi
\end{equation*}
extends to a bounded linear operator $\tilde P$ on $L_2(\M,\phi)$, and $\phi$-symmetry of $P$ implies that $\tilde P$ is symmetric.

We consider the left Hilbert algebra $\mathfrak{A}=\M\Omega_\phi$ as in \cite[Example VI.1.3]{Tak03} with modular operator $\Delta$, modular conjugation $J$ and $S=J\Delta^{1/2}$. Since $P$ is hermitian, $\tilde P$ commutes with $S$.

Let
\begin{equation*}
\mathfrak{A}_0=\left\{\xi \in \bigcap_{n\in\IZ}D(\Delta^n):\Delta^n\xi\in\mathfrak{A}\text{ for all }n\in\IZ\right\}
\end{equation*}
be the associated  Tomita algebra.

In the following we will freely use the relations between $S$, $J$ and $\Delta$ as detailed in \cite[Lemma VI.1.5]{Tak03}. For $\xi,\eta\in \mathfrak A_0$ we have
\begin{align*}
\langle\tilde P\Delta \xi,\eta\rangle&=\langle \Delta \xi,\tilde P\eta\rangle\\
&=\langle J\tilde P\eta,J\Delta \xi\rangle\\
&=\langle \Delta^{1/2}S\tilde P\eta,S\Delta^{1/2}\xi\rangle\\
&=\langle S\eta,\tilde P\Delta^{1/2}S\Delta^{1/2}\xi\rangle\\
&=\langle S\eta,S\tilde P\xi\rangle\\
&=\langle JS\tilde P\xi,JS\eta\rangle\\
&=\langle \Delta^{1/2}\tilde P \xi,\Delta^{1/2}\eta\rangle\\
&=\langle \tilde P\xi,\Delta \eta\rangle.
\end{align*}
Since $\mathfrak{A}_0$ is a core for $\Delta$ by \cite[Theorem VI.2.2 (ii)]{Tak03}, it follows that $\tilde P\xi\in D(\Delta)$ and $\Delta \tilde P\xi=\tilde P\Delta\xi$ for all $\xi\in\mathfrak{A}_0$. Again since $\mathfrak{A}_0$ is a core, this implies $\Delta\tilde P=\tilde P\Delta$ and thus $\Delta^{it}\tilde P=\tilde P\Delta^{it}$ for all $t\in\IR$.

Therefore
\begin{equation*}
\sigma^\phi_t(P(x))\Omega_\phi=\Delta^{it}(P(x)\Omega_\phi)=\Delta^{it}\tilde P(x\Omega_\phi)=\tilde P\Delta^{it}(x\Omega_\phi)=P(\sigma^\phi_t(x))\Omega_\phi.
\end{equation*}
As $\Omega_\phi$ is a separating vector for $\M$, this proves the claim.
\end{proof}

\begin{remark}
Usually the extension of $P$ from $\M$ to the GNS Hilbert space $H$ is not defined via the embedding $x\mapsto x\Omega_\phi$ as in this proof, but via the symmetric embedding $x\mapsto \Delta^{1/4}(x\Omega_\phi)$. However, commutation with the modular group as proven here implies that both extensions actually coincide.
\end{remark}

\section{Von Neumann modules and correspondences}

Throughout this article we use the language of von Neumann modules and correspondences, which we briefly recall now. As a detailed reference for von Neumann modules we refer to \cite{Ske01}, while the necessary results on correspondences can be found in \cite[Section IX.3]{Tak03}.

Von Neumann modules are noncommutative analogs of $L^\infty$ sections of a vector bundle, while correspondences are analogs of $L^2$ sections. In the presence of a normal faithful state on the underlying von Neumann algebra, both carry the same information and one can go back and forth between these two notions (see \Cref{lem:ident_left_bounded}), which will be useful later on.

Let $A$ be a unital $C^\ast$-algebra. A \emph{pre-$C^\ast$ $A$-module} is a right $A$-module $F$ with a sesquilinear map $(\cdot\mid\cdot)\colon F\times F\to A$ such that
\begin{itemize}
\item $(\xi|\eta a)=(\xi|\eta)a$ for all $\xi,\eta\in F$, $a\in A$,
\item $(\xi|\xi)\geq 0$ for all $\xi\in F$,
\item $(\xi|\xi)=0$ implies $\xi=0$.
\end{itemize}
A \emph{$C^\ast$ $A$-module} is a pre-$C^\ast$ $A$-module $F$ that is complete in the norm
\begin{equation*}
\norm{\xi}_F=\norm{(\xi|\xi)}^{1/2}.
\end{equation*}
A $C^\ast$-module over $\IC$ is nothing but a Hilbert space.

A bounded linear operator $T$ on a $C^\ast$-module $F$ is called \emph{adjointable} if there exists a bounded linear operator $T^\ast$ on $F$ such that
\begin{equation*}
(T\xi|\eta)=(\xi|T^\ast\eta)
\end{equation*}
for all $\xi,\eta\in F$. Note that all adjointable operators are right module maps, that is, $T(\xi a)=(T\xi)a$ for all $a\in A$, $\xi\in F$.

Let $A$, $B$ and $C$ be unital $C^\ast$-algebras. A \emph{$C^\ast$ $A$-$B$-module} is a $C^\ast$-bimodule together with an action of $A$ by adjointable operators. In particular, a $C^\ast$ $\IC$-$A$-bimodule is the same as a $C^\ast$ $A$-module and a $C^\ast$ $A$-$\IC$-bimodule the same as a representation of $A$ on a Hilbert space. In the case $A=B$ we simply speak of $C^\ast$ $A$-bimodules.

The \emph{tensor product} $F\bar\odot G$ of a $C^\ast$ $A$-$B$-module $F$ and a $C^\ast$ $B$-$C$-module is the $C^\ast$ $A$-$C$-module obtained from the algebraic tensor product $F\otimes G$ after separation and completion with respect to the $C$-valued inner product given by
\begin{equation*}
(\xi\otimes\eta|\xi'\otimes\eta')=(\eta|(\xi|\xi')\eta')
\end{equation*}
and the actions given by
\begin{equation*}
a(\xi\otimes\eta)=a\xi\otimes \eta,\,(\xi\otimes\eta)c=\xi\otimes\eta c.
\end{equation*}
If $A$ is a $C^\ast$-algebra of bounded operators on $H$ and $F$ is a $C^\ast$ $A$-module, we can embed $F$ into $B(H,F\bar\odot H)$ by the action 
\begin{equation*}
H\to F\bar\odot H,\,\zeta\mapsto \xi\otimes\zeta
\end{equation*}
for $\xi\in F$. If we refer to the strong topology on a $C^\ast$-module in the following, we always mean the strong topology in this embedding, where $H$ is clear from the context (usually the standard form of a von Neumann algebra).

Let $\M$ be a von Neumann algebra on $H$. A \emph{von Neumann $\M$-module} is a $C^\ast$ $\M$-module $F$ that is strongly closed in $B(H,F\bar\odot H)$. The adjointable operators on a von Neumann $\M$-module form a von Neumann algebra $\L_\M(F)$.

If $\N$ is another von Neumann algebra on $K$, then a $C^\ast$ $\M$-$\N$-module is a \emph{von Neumann $\M$-$\N$-module} if it is a von Neumann $\N$-module and the left action of $\M$ on $F\bar\odot K$ is normal.

Assume that $\psi$ is a normal faithful state on $\N$. If $H=L_2(\N,\psi)$, the Hilbert space $F\bar\odot H$ carries not only a left action of $\N$, but also a right action given by
\begin{equation*}
(\xi\otimes\eta)x=\xi\otimes\eta x
\end{equation*}
for $\xi\in F$, $\eta\in L_2(\N,\psi)$ and $x\in \M$. This makes $F\bar\odot L_2(\N,\psi)$ into an \emph{$\M$-$\N$-correspondence} in the sense of Connes, that is, a Hilbert space with commuting representations of $\M$ and $\N^\circ$, the opposite $\ast$-algebra of $\N$.

Conversely, one can recover $F$ from $F\bar\odot L_2(\N,\psi)$. To prove this fact, we need some more terminology. Let $\H$ be an $\M$-$\M$-correspondence. A vector $\xi\in \H$ is called \emph{left-bounded} if there exists a constant $C(\xi)>0$ such that
\begin{equation*}
\norm{\xi y}_\H\leq C(\xi)\norm{\Omega_\psi y}_2
\end{equation*}
for all $y\in \N$. The set of all left-bounded vectors in denoted by $L_\infty(\H_\N,\psi)$. By definition, for every $\xi\in L_\infty(\H_\N,\psi)$ the map
\begin{equation*}
\Omega_\psi\N\to \H,\,\Omega_\psi x\mapsto \xi x
\end{equation*}
extends to a bounded right module map $L(\xi)$ from $L_2(\N,\psi)$ to $\H$. In particular, if $\xi,\eta\in L_\infty(\H_\N,\psi)$, then $L(\xi)^\ast L(\eta)$ commutes with the right action of $\N$ on $L_2(\N,\psi)$, which implies $L(\xi)^\ast L(\eta)\in \N$. Conversely, if $T\colon L_2(\N,\psi)\to \H$ is a bounded right module map, then it is not hard to see that $T\Omega_\psi$ is left-bounded and $T=L(T\Omega_\psi)$.

\begin{proposition}\label{lem:ident_left_bounded}
The map
\begin{equation*}
F\to F\bar\odot L_2(\N,\psi),\,\xi\mapsto \xi\otimes\Omega_\psi
\end{equation*}
is a bijection onto $L_\infty((F\bar\odot L_2(\N,\psi))_\M,\psi)$ and
\begin{equation*}
L(\xi\otimes\Omega_\psi)^\ast L(\zeta\otimes\Omega_\psi)=(\xi|\zeta)
\end{equation*}
for all $\xi,\zeta\in F$.

Conversely, if $\H$ is an $\M$-$\N$-correspondence, then $L_\infty(\H_\N,\psi)$ endowed with the actions
\begin{align*}
\M\times L_\infty(\H_\N,\psi)\to L_\infty(\H_\M,\phi),\,(x,\xi)\mapsto x\xi,\\
L_\infty(\H_\N,\psi)\times \N\to L_\infty(\H_\N,\psi),\,(\xi,y)\mapsto L(\xi)y\Omega_\psi
\end{align*}
and the $\N$-valued inner product $(\xi|\eta)=L(\xi)^\ast L(\eta)$ is a von Neumann $\M$-$\N$-module, and the map
\begin{equation*}
L_\infty(\H_\N,\psi)\otimes \Omega_\psi\N\to \H,\,\xi\otimes \Omega_\psi x\mapsto \xi x
\end{equation*}
extends to a unitary from $L_\infty(\H_\N,\psi)\bar\odot L_2(\N,\psi)$ to $\H$.
\end{proposition}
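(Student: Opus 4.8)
The two assertions are essentially one equivalence read in opposite directions, so the plan is to set up the dictionary between left-bounded vectors of $F\bar\odot L_2(\N,\psi)$ and bounded right $\N$-module maps $L_2(\N,\psi)\to F\bar\odot L_2(\N,\psi)$ once and to apply it in both halves.

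For the first part, a direct computation with the defining relations of the interior tensor product shows that $\xi\otimes\Omega_\psi$ is left-bounded with $C(\xi\otimes\Omega_\psi)=\norm{\xi}_F$, so that $L(\xi\otimes\Omega_\psi)$ is defined. Evaluating $\langle L(\xi\otimes\Omega_\psi)\Omega_\psi y,L(\zeta\otimes\Omega_\psi)\Omega_\psi z\rangle$ on the dense set $\Omega_\psi\N$ and using that $\Omega_\psi$ is separating gives $L(\xi\otimes\Omega_\psi)^\ast L(\zeta\otimes\Omega_\psi)=(\xi|\zeta)$; specializing to $\xi=\zeta$ shows that $\xi\mapsto\xi\otimes\Omega_\psi$ is injective. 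For surjectivity I would first note that the balanced relation $\xi a\otimes\Omega_\psi=\xi\otimes a\Omega_\psi$ makes $\{\xi\otimes\Omega_\psi:\xi\in F\}$ norm-dense in $F\bar\odot L_2(\N,\psi)$. Let $\mathcal B$ be the set of bounded right $\N$-module maps $L_2(\N,\psi)\to F\bar\odot L_2(\N,\psi)$, a strongly closed subspace of $B(L_2(\N,\psi),F\bar\odot L_2(\N,\psi))$ containing the isometric copy $\{L(\xi\otimes\Omega_\psi):\xi\in F\}$ of $F$. Since $F$ is strongly closed it suffices to show that $F$ is strongly dense in $\mathcal B$: for $T\in\mathcal B$, vectors $\Omega_\psi y_1,\dots,\Omega_\psi y_n$ (sufficient by density of $\Omega_\psi\N$) and $\epsilon>0$, the identities $T(\Omega_\psi y_i)=(T\Omega_\psi)y_i$ and $L(\xi\otimes\Omega_\psi)(\Omega_\psi y_i)=(\xi\otimes\Omega_\psi)y_i$ reduce the matter to approximating $T\Omega_\psi$ by some $\xi\otimes\Omega_\psi$ in norm, which is possible by the density just observed. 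Hence $F=\mathcal B$, which is the asserted surjectivity.

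For the converse, the remarks preceding the statement already say that $\xi\mapsto L(\xi)$ is a bijection of $L_\infty(\H_\N,\psi)$ onto the bounded right $\N$-module maps $L_2(\N,\psi)\to\H$; as in the first part this set is a von Neumann $\M$-$\N$-module (inner product $S^\ast T$, left $\M$-action by composition with the $\M$-action on $\H$, right $\N$-action by precomposition with left multiplication on $L_2(\N,\psi)$), and transporting the structure along the bijection recovers exactly the operations in the statement, so $L_\infty(\H_\N,\psi)$ is a von Neumann $\M$-$\N$-module. The candidate unitary $U\colon L_\infty(\H_\N,\psi)\bar\odot L_2(\N,\psi)\to\H$, $\xi\otimes v\mapsto L(\xi)v$, is isometric because $\langle\xi\otimes v,\eta\otimes w\rangle=\langle v,(\xi|\eta)w\rangle=\langle L(\xi)v,L(\eta)w\rangle$, and on $v=\Omega_\psi x$ it is the map in the statement. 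Its range is the closed span of the vectors $L(\xi)v=\xi v$ with $\xi$ left-bounded and $v\in L_2(\N,\psi)$, so $U$ is onto precisely when the left-bounded vectors are dense in $\H$, which holds for every correspondence over a von Neumann algebra in standard form (part of the standard dictionary between modules and correspondences, see \cite{Ske01} or \cite[Section IX.3]{Tak03}). Thus $U$ is unitary.

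The calculations are routine; the substantive points are two. The first is the passage from norm-density of $F\otimes\Omega_\psi$ to strong density of $F$ in $\mathcal B$, which is the only place where the von Neumann (rather than merely $C^\ast$) structure of the module is used — without strong closedness one cannot conclude $F=\mathcal B$. The second, in the converse, is the density of left-bounded vectors in an arbitrary $\M$-$\N$-correspondence; I would isolate this as a lemma and, for a self-contained argument, reduce it to $\H=L_2(\N,\psi)$ and its amplifications by cutting down with spectral projections of a spatial Radon--Nikodym derivative.
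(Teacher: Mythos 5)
Your surjectivity argument in the first half has a genuine gap. You reduce ``$F$ is strongly dense in $\mathcal B$'' to approximating $T\Omega_\psi$ by vectors $\xi\otimes\Omega_\psi$ in the norm of $\H=F\bar\odot L_2(\N,\psi)$, checking the strong seminorms only at test vectors of the form $\Omega_\psi y_i$. But the strong topology on $B(L_2(\N,\psi),\H)$ is pointwise convergence on \emph{all} of $L_2(\N,\psi)$, and passing from the dense set $\Omega_\psi\N$ to arbitrary vectors requires a uniform bound on the operator norms $\norm{L(\xi\otimes\Omega_\psi)}=\norm{(\xi|\xi)}^{1/2}=\norm{\xi}_F$ of the approximants. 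Norm approximation of $T\Omega_\psi$ in $\H$ gives no such control: one has $\norm{\xi\otimes\Omega_\psi}_\H^2=\psi((\xi|\xi))$, which can be arbitrarily small while $\norm{\xi}_F=\norm{(\xi|\xi)}^{1/2}$ is arbitrarily large (already for $F=\N$ with $(\xi|\zeta)=\xi^\ast\zeta$, where the two norms are $\norm{\xi\Omega_\psi}_2$ and $\norm{\xi}$). So the parenthetical ``sufficient by density of $\Omega_\psi\N$'' is exactly the unproved point; what you would need is a Kaplansky-type density statement for the unit ball of $F$ in $\mathcal B$, which is essentially equivalent to the surjectivity you are trying to establish.

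The paper closes this step differently: given a left-bounded $\eta$, it forms $\Phi\colon F\to\N$, $\xi\mapsto L(\eta)^\ast L(\xi\otimes\Omega_\psi)$, verifies that $\Phi$ is a bounded right module map (a short computation using $x\Omega_\psi=\Omega_\psi\sigma^\psi_{i/2}(x)$ for analytic $x$), and then invokes self-duality of von Neumann modules to represent $\Phi$ as $(\zeta|\cdot)$ for some $\zeta\in F$; density of the vectors $L(\xi\otimes\Omega_\psi)\Omega_\psi y$ then forces $L(\eta)=L(\zeta\otimes\Omega_\psi)$, hence $\eta=\zeta\otimes\Omega_\psi$. Self-duality is where the von Neumann (as opposed to merely $C^\ast$) structure genuinely enters, and it is not replaceable by the norm-density observation. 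The remainder of your proposal --- the left-boundedness estimate, the identity $L(\xi\otimes\Omega_\psi)^\ast L(\zeta\otimes\Omega_\psi)=(\xi|\zeta)$, and the converse direction via the bijection between left-bounded vectors and bounded right module maps together with the density of left-bounded vectors in a correspondence --- is sound and in line with the paper's proof.
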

\begin{proof}
If $y\in \N$ and $\xi\in F$, then
\begin{equation*}
\norm{(\xi\otimes\Omega_\psi)y}^2=\langle\Omega_\psi y,(\xi|\xi)\Omega_\psi y\rangle_2\leq \norm{(\xi|\xi)}\norm{\Omega_\psi y}_2^2.
\end{equation*}
Thus $\xi\otimes\Omega_\phi$ is left-bounded. The equality $L(\xi\otimes\Omega_\psi)^\ast L(\zeta\otimes\Omega_\psi)=(\xi|\zeta)$ for $\xi,\zeta\in F$ is straightforward.

To prove that the map $\xi\mapsto \xi\otimes\Omega_\psi$ is a surjection onto $L_\infty((F\bar\odot L_2(\N,\psi))_\N,\psi)$, let $\eta\in L_\infty((F\bar\odot L_2(\N,\psi))_\N,\psi)$ and define the map
\begin{equation*}
\Phi\colon F\to \N,\,\xi\mapsto L(\eta)^\ast L(\xi\otimes\Omega_\psi)
\end{equation*}

We have
\begin{align*}
\Phi(\xi x)(\Omega_\psi y)&=L(\eta)^\ast (\xi x\otimes \Omega_\psi y)\\
&=L(\eta)^\ast(\xi\otimes x\Omega_\psi y)\\
&=L(\eta)^\ast(\xi\otimes \Omega_\psi \sigma^\psi_{i/2}(x)y)\\
&=L(\eta)^\ast L(\xi\otimes \Omega_\psi)\Omega_\psi \sigma^\psi_{i/2}(x) y\\
&=\Phi(\xi)(x\Omega_\psi y)
\end{align*}
for all $x\in \N_{\sigma^\psi}$, $y\in\N$ and $\xi\in F$. By approximation, this implies that $\Phi$ is a right module map.

Since von Neumann modules are self-dual \cite[Theorem 3.2.11]{Ske01}, there exists $\zeta\in F$ such that
\begin{equation*}
L(\eta)^\ast L(\xi\otimes\Omega_\phi)=(\zeta|\xi)=L(\zeta\otimes\Omega_\phi)^\ast L(\xi\otimes\Omega_\phi)
\end{equation*}
for all $\xi\in F$. Since the linear hull of elements of the form $L(\xi\otimes\Omega_\psi)x$ with $\xi\in F$ and $y\in \N$ is dense in $F\bar\odot L_2(\N,\psi)$, we have $L(\eta)=L(\zeta\otimes\Omega_\psi)$, which implies $\eta=\zeta\otimes\Omega_\psi$ by applying both operators to $\Omega_\psi$.

For the converse, first note that whenever $\xi\in L_\infty(\H_\N,\psi)$ and $x,y\in \N$, then
\begin{equation*}
\norm{(L(\xi)x\Omega_\psi)y}_\H=\norm{L(\xi)(x\Omega_\psi y)}_\H\leq \norm{L(\xi)}\norm{x}\norm{\Omega_\psi y}_2,
\end{equation*}
so that $L(\xi)x\Omega_\psi$ is left-bounded and thus the right action is well-defined. It is not difficult to check that this makes $L_\infty(\H_\N,\psi)$ a $C^\ast$ $\M$-$\N$-module and that $L_\infty(\H_\N,\psi)\bar\odot L_2(\N,\psi)\cong \H$ under the map above.

With this identification, the canonical embedding 
\begin{equation*}
L_\infty(\H_\N,\psi)\hookrightarrow B(L_2(\N,\psi),L_\infty(\H_\N,\psi)\bar\odot L_2(\N,\psi))
\end{equation*}
is just the map $\xi\mapsto L(\xi)$. As discussed above, $\{L(\xi)\mid \xi\in L_\infty(\H_\M,\phi)\}$ is the set of all bounded right module maps from $L_2(\N,\psi)$ to $\H$, which is clearly strongly closed in $B(L_2(\N,\psi),\H)$. Normality of the actions is easy to check, so that $L_\infty(\H_\N,\psi)$ is a von Neumann $\M$-bimodule.
\end{proof}
\begin{remark}
As mentioned above, left-bounded vectors in $\mathcal H$ are in one-to-one correspondence with right module maps from $L_2(\N,\psi)$ to $\mathcal H$. That these form a von Neumann $\M$-$\N$-module has already been observed in \cite[Section 1.6]{Sch02}.
\end{remark}

If $\H_1$ is an $\M$-$\N$-correspondence and $\H_2$ is an $\N$-$\mathcal R$-correspondences, their \emph{relative tensor product} or \emph{Connes fusion tensor product} $\H_1\otimes_\psi\H_2$ is the $\M$-$\mathcal R$-correspondence obtained from $L_\infty((\H_1)_\N,\psi)\otimes \H_2$ after separation and completion with respect to
\begin{equation*}
B(\xi_1\otimes\eta_1,\xi_2\otimes\eta_2)=\langle \eta_1,L(\xi_1)^\ast L(\xi_2)\eta_2\rangle_{\H_2}.
\end{equation*}
The following identification is a straightforward consequence of the previous lemma.

\begin{lemma}\label{lem:rel_tensor_product}
If $F_1$, $F_2$ are von Neumann $\M$-$\M$-bimodules, then the map
\begin{align*}
F_1\otimes F_2\otimes\Omega_\phi&\to (F_1\bar\odot L_2(\M,\phi))\otimes_\phi(F_2\bar\odot L_2(\M,\phi)),\\
\xi\otimes\eta\otimes \Omega_\phi&\mapsto (\xi\otimes \Omega_\phi)\otimes_\phi (\eta\otimes \Omega_\phi)
\end{align*}
extends to a unitary from $(F_1\bar\odot F_2)\bar\odot L_2(\M,\phi)$ onto $(F_1\bar\odot L_2(\M,\phi))\otimes_\phi(F_2\bar\odot L_2(\M,\phi))$.
\end{lemma}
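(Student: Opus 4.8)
The plan is to check that the indicated map preserves inner products on a total set of vectors and then promote the resulting isometry to a unitary by a density argument; the only real input is \Cref{lem:ident_left_bounded}, which identifies $L_\infty((F_i\bar\odot L_2(\M,\phi))_\M,\phi)$ with $F_i$ via $\xi\mapsto\xi\otimes\Omega_\phi$ and gives $L(\xi\otimes\Omega_\phi)^\ast L(\zeta\otimes\Omega_\phi)=(\xi|\zeta)$ for $\xi,\zeta\in F_i$.

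First I would record the relevant density statements. Since $F_1\bar\odot F_2$ is a unital von Neumann $\M$-bimodule we have $(F_1\bar\odot F_2)\M=F_1\bar\odot F_2$, so $\{\omega\otimes\Omega_\phi:\omega\in F_1\bar\odot F_2\}$ has dense linear span in $(F_1\bar\odot F_2)\bar\odot L_2(\M,\phi)$; because $\omega\mapsto\omega\otimes\Omega_\phi$ turns strong convergence into norm convergence and the algebraic tensors $\xi\otimes\eta$ ($\xi\in F_1$, $\eta\in F_2$) are strongly dense in $F_1\bar\odot F_2$, already the vectors $(\xi\otimes\eta)\otimes\Omega_\phi$ span a dense subspace. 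On the codomain side, the left-bounded vectors are dense in $F_2\bar\odot L_2(\M,\phi)$, and for fixed $\xi\in F_1$ the map $\theta\mapsto(\xi\otimes\Omega_\phi)\otimes_\phi\theta$ is bounded, with $\norm{(\xi\otimes\Omega_\phi)\otimes_\phi\theta}^2=\langle\theta,(\xi|\xi)\theta\rangle\leq\norm{(\xi|\xi)}\norm{\theta}^2$; hence $\{(\xi\otimes\Omega_\phi)\otimes_\phi(\eta\otimes\Omega_\phi):\xi\in F_1,\,\eta\in F_2\}$ spans a dense subspace of $(F_1\bar\odot L_2(\M,\phi))\otimes_\phi(F_2\bar\odot L_2(\M,\phi))$.

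The core of the proof is then the inner-product computation. Writing $\H_i=F_i\bar\odot L_2(\M,\phi)$ and using the definition of the Connes fusion inner product together with \Cref{lem:ident_left_bounded},
\begin{align*}
\langle(\xi_1\otimes\Omega_\phi)\otimes_\phi(\eta_1\otimes\Omega_\phi),(\xi_2\otimes\Omega_\phi)\otimes_\phi(\eta_2\otimes\Omega_\phi)\rangle
&=\langle\eta_1\otimes\Omega_\phi,(\xi_1|\xi_2)(\eta_2\otimes\Omega_\phi)\rangle_{\H_2}\\
&=\langle\eta_1\otimes\Omega_\phi,((\xi_1|\xi_2)\eta_2)\otimes\Omega_\phi\rangle_{\H_2}\\
&=\phi\bigl((\eta_1|(\xi_1|\xi_2)\eta_2)\bigr)\\
&=\phi\bigl((\xi_1\otimes\eta_1|\xi_2\otimes\eta_2)\bigr)\\
&=\langle(\xi_1\otimes\eta_1)\otimes\Omega_\phi,(\xi_2\otimes\eta_2)\otimes\Omega_\phi\rangle
\end{align*}
for all $\xi_1,\xi_2\in F_1$ and $\eta_1,\eta_2\in F_2$, where the second equality uses that the middle-algebra element $(\xi_1|\xi_2)\in\M$ acts on the correspondence $\H_2$ through the left action $\H_2$ inherits from $F_2$, the third and fifth are the formula for the inner product on a von Neumann module tensored with $L_2(\M,\phi)$, and the fourth is the definition of the $\M$-valued inner product on $F_1\bar\odot F_2$.

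Since the two sides of the display agree, the assignment $(\xi\otimes\eta)\otimes\Omega_\phi\mapsto(\xi\otimes\Omega_\phi)\otimes_\phi(\eta\otimes\Omega_\phi)$ is a well-defined linear isometry on the dense subspace spanned by the vectors $(\xi\otimes\eta)\otimes\Omega_\phi$ (well-definedness follows automatically from the norm identity), hence extends to an isometry between the two Hilbert spaces; by the second density statement its range is dense, so it is unitary. The computation itself is entirely routine; the one place that calls for care is the bookkeeping of module structures — in particular recognizing that the element $(\xi_1|\xi_2)$ appearing in the Connes fusion inner product acts on $F_2\bar\odot L_2(\M,\phi)$ through the left $\M$-action coming from $F_2$, which is precisely what makes the inner product on $F_1\bar\odot F_2$ reappear. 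I do not anticipate a genuine obstacle.
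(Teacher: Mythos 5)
Your proof is correct and follows exactly the route the paper intends: the paper gives no argument beyond calling the lemma ``a straightforward consequence'' of \Cref{lem:ident_left_bounded}, and your inner-product computation via $L(\xi\otimes\Omega_\phi)^\ast L(\zeta\otimes\Omega_\phi)=(\xi|\zeta)$, together with the density checks on both sides, is precisely the intended filling-in of that claim. No gaps.
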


Besides left-bounded vectors, there is the dual notion of right-bounded vectors. An element $\xi\in \H$ is called \emph{right-bounded} if there exists a constant $C(\xi)>0$ such that
\begin{equation*}
\norm{x\xi}_\H\leq C(\xi)\norm{x\Omega_\phi}_2.
\end{equation*}
The set of all right-bounded vectors is denoted by $L_\infty(_\M\H,\phi)$. By definition, for every $\xi\in L_\infty(_\M\H,\phi)$ the map
\begin{equation*}
\M\Omega_\phi\to\H,\,x\Omega_\phi\mapsto x\xi
\end{equation*}
extends to a bounded left module map $R(\xi)$ from $L_2(\M,\phi)$ to $\H$. Analogously to the case of left-bounded vectors, this implies $R(\xi)^\ast R(\eta)\in \M'$ for $\xi,\eta\in L_\infty(_\M\H,\phi)$.

Let $\H_1$, $\H_2$ be $\M$-$\M$-correspondences. By \cite[Proposition IX.3.15]{Tak03}, the semi-inner product form
\begin{equation*}
B'\colon\H_1\otimes L_\infty(_\M\H_2,\phi)\to \IC,\,B'( \xi_1\otimes \eta_1,\xi_2\otimes\eta_2)=\langle \xi_1\cdot JR(\eta_1)^\ast R(\eta_2)J,\xi_2\rangle
\end{equation*}
coincides with $B$ on $L_\infty((\H_1)_\M,\phi)\otimes L_\infty(_\M\H_2,\phi)$. Thus the $\M$-$\M$-correspondence obtained from $\H_1\otimes L_\infty(_\M\H_2,\phi)$ after separation and completion with respect to $B'$ is canonically isomorphic to $\H_1\otimes_\phi\H_2$.

\section{Christensen--Evans theorem for GNS-symmetric quantum Markov semigroups}

In this section we prove the first main result of this article, a refined version of the Christensen--Evans theorem \cite[Theorem 3.1]{CE79} for the generators of uniformly continuous GNS--symmetric quantum Markov semigroups (\Cref{thm:CE_symmetric}). The Christensen--Evans theorem asserts that the generator $\L$ of a uniformly continuous quantum Markov semigroup on a von Neumann algebra $\M$ is of the form
\begin{equation*}
\L(x)=k^\ast x+x k-\Phi(x)
\end{equation*}
with $k\in \M$ and a completely positive map $\Phi\colon\M\to \M$ such that $\Phi(1)=k+k^\ast$.

We will show that if the given quantum Markov semigroup is $\phi$-symmetric, then $k$ can be chosen positive and in the centralizer of $\phi$ and $\Phi$ can be chosen $\phi$-symmetric. 

Let us first recall the definition of GNS-symmetric quantum Markov semigroups. Let $\M$ be a von Neumann algebra and $\phi$ a normal faithful state on $\M$. A \emph{quantum Markov semigroup} is a family $(P_t)_{t\geq 0}$ of normal unital completely positive maps on $\M$ such that
\begin{enumerate}[(i)]
\item $P_0=I$,
\item $P_s P_t=P_{s+t}$ for $s,t\geq 0$,
\item $P_t(x)\to x$ weakly as $t\searrow 0$ for every $x\in \M$.
\end{enumerate}
A quantum Markov semigroup $(P_t)$ is called \emph{GNS-symmetric} if the individual maps $P_t$ are GNS-symmetric for all $t\geq 0$. Note that in this case normality of the maps $P_t$ is automatic by \Cref{lem:GNS_normal} and furthermore every $P_t$ commutes with the modular group $\sigma^\phi$ by \Cref{prop:mod_group_commute}.

If the convergence in (iii) is in norm for every $x\in \M$, then the quantum Markov semigroup is called \emph{uniformly continuous}. In this case, the generator $\L$ defined by
\begin{equation*}
\L(x)=\lim_{t\searrow 0}\frac 1 t(x-P_t(x))
\end{equation*}
is a normal bounded operator on $\M$. It is known that a normal bounded linear map $\L$ on $\M$ generates a quantum Markov semigroup if and only if $\L(1)=0$ and $\L$ is conditionally negative definite, that is,
\begin{equation*}
\sum_{j,k=1}^n x_j^\ast \L(a_j^\ast a_k)x_k\leq 0
\end{equation*}
whenever $a_1,\dots,a_n,x_1,\dots,x_n\in \M$ such that $\sum_j a_j x_j=0$. Moreover, it is easy to see that the quantum Markov semigroup generated by $\L$ is GNS-symmetric if and only if $\L$ is GNS-symmetric. The associated \emph{carré du champ} is the $\M$-valued bilinear map
\begin{equation*}
\Gamma\colon\M\times\M\to\M,\,\Gamma(x,y)=\frac 1 2(\L(x)^\ast y+x^\ast\L(y)-\L(x^\ast y).
\end{equation*}
We write $\Gamma(x)$ for $\Gamma(x,x)$.

Using the conditional negative definiteness of the generator, one can associate a von Neumann $\M$-bimodule with a quantum Markov semigroup. This construction is known as the \emph{GNS construction} for quantum Markov semigroups. Let us briefly sketch it.

On $\M\otimes \M$ define an $\M$-valued bilinear form $B$ by
\begin{equation*}
B(a\otimes x,b\otimes y)=-\frac  1 2 x^\ast \L(a^\ast b)y.
\end{equation*}
Since $\L$ is conditionally negative definite, $B$ is positive semi-definite on the subspace $F_0=\{\sum_j a_j\otimes x_j\mid \sum_j a_j x_j=0\}$. Let $F_0^\prime$ be the $C^\ast$ $\M$-bimodule obtained from $F_0$ after separation and completion with respect to $B$ and write $(\cdot|\cdot)$ for its $\M$-valued inner product. The strong closure $\overline{F_0^\prime}^s$ of $F_0^\prime$ inside $B(L_2(\M,\phi),F_0^\prime\bar\odot L_2(\M,\phi))$ is a von Neumann $\M$-bimodule.

Define a derivation $\delta$ on $\M$ with values in $\overline{F_0^\prime}^s$ by
\begin{equation*}
\delta(x)=x\otimes 1-1\otimes x.
\end{equation*}
A direct calculation shows
\begin{equation*}
(\delta(x)|\delta(y))=\frac 1 2 (\L(x)^\ast y+x^\ast \L(y)-\L(x^\ast y))=\Gamma(x,y).
\end{equation*}
The von Neumann $\M$-bimodule $F$ associated with $(P_t)$ is the strong closure of $\{x\delta(y)\mid x,y\in \M\}$ inside $\overline{F_0^\prime}^s$. 

The von Neumann $\M$-bimodule $F$ and the derivation $\delta$ are essentially uniquely determined by $(P_t)$. To show this, we will make use of the fact that von Neumann bimodules are dual spaces. More precisely, according to \cite[Proposition 3.8]{Pas73}, $F$ is the dual space of $\M_\ast\otimes_\pi \bar F/\ker \iota$, where $\bar F$ is the conjugate space of $F$ and
\begin{equation*}
\iota\colon \M_\ast\otimes_\pi\bar F\to F^\ast,\,\omega\otimes \zeta\mapsto \omega((\zeta|\,\cdot\,)),
\end{equation*}
with the duality given by
\begin{equation*}
\M_\ast\otimes_\pi\bar F/\ker\iota\times F\to\IC,\,(\psi+\iota,\xi)\mapsto \iota(\psi)(\xi).
\end{equation*}
In fact, the predual of a von Neumann module is unique according to \cite[Theorem 2.6]{Sch02}, which justifies to denote it by $F_\ast$.

\begin{proposition}\label{prop:F_unique}
If $F'$ is a von Neumann $\M$-bimodule and $\delta'\colon \M\to F'$ is a derivation such that $(\delta'(x)|\delta'(y))=\Gamma(x,y)$ for all $x,y\in\M$ and $\{\delta'(x)y\mid x,y\in\M\}$ is strongly dense in $F'$, then there exists a unique weak$^\ast$ continuous bimodule isomorphism $\alpha\colon F\to F'$ that satisfies 
\begin{equation*}
(\alpha(\xi)|\alpha(\zeta))=(\xi|\zeta)
\end{equation*}
for all $\xi,\zeta\in F$ and $\alpha\circ\delta=\delta'$.
\end{proposition}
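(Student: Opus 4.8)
The plan is to construct the isomorphism first on the algebraic level, where everything is controlled by $\Gamma$ alone, and then to push it up to the von Neumann module level via the self-duality of von Neumann modules. Set $V=\operatorname{span}\{x\delta(y):x,y\in\M\}\subseteq F$ and $V'=\operatorname{span}\{x\delta'(y):x,y\in\M\}\subseteq F'$, and define $\alpha_0\colon V\to V'$ by $\alpha_0(\sum_j x_j\delta(y_j))=\sum_j x_j\delta'(y_j)$. The key observation is that, writing $x\delta(y)=\delta(xy)-\delta(x)y$ and using only that $\delta$ is a derivation into a $C^\ast$ $\M$-bimodule together with $(\delta(a)|\delta(b))=\Gamma(a,b)$, one computes
\begin{equation*}
(x\delta(y)|u\delta(v))=\Gamma(xy,uv)-\Gamma(xy,u)v-y^\ast\Gamma(x,uv)+y^\ast\Gamma(x,u)v.
\end{equation*}
Since the right-hand side only involves $\Gamma$, the same identity holds for $\delta'$ in $F'$; hence $\alpha_0$ is well defined, bijective (its inverse is given by the symmetric formula), an $\M$-bimodule map, preserves the $\M$-valued inner product, and satisfies $\alpha_0\circ\delta=\delta'$. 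Being isometric for the module norm, $\alpha_0$ extends to an inner-product-preserving bimodule isomorphism $\alpha_1\colon F_1\to F_1'$, where $F_1$, $F_1'$ are the norm-closures of $V$, $V'$ — $C^\ast$ $\M$-subbimodules of $F$, $F'$ with $\overline{F_1}^{\,s}=F$ and $\overline{F_1'}^{\,s}=F'$.

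Next I would extend $\alpha_1$ to a weak$^\ast$ continuous map on all of $F$ by passing through the associated correspondences. Put $\H=F\bar\odot L_2(\M,\phi)$ and $\H'=F'\bar\odot L_2(\M,\phi)$. Since $V$ is strongly dense in $F$, the elements $\xi\otimes\Omega_\phi x$ with $\xi\in F_1$, $x\in\M$, are dense in $\H$ (and similarly on the primed side), and since $\langle\xi\otimes\Omega_\phi x,\zeta\otimes\Omega_\phi z\rangle_\H=\langle\Omega_\phi x,(\xi|\zeta)\Omega_\phi z\rangle_2$ depends only on the $\M$-valued inner product, the previous step shows that $\xi\otimes\Omega_\phi x\mapsto\alpha_1(\xi)\otimes\Omega_\phi x$ extends to a unitary $W\colon\H\to\H'$; one checks on the dense subspace that $W$ intertwines the left and right actions of $\M$, so $W$ is an isomorphism of $\M$-$\M$-correspondences. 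Because $\norm{(W\xi)y}=\norm{W(\xi y)}=\norm{\xi y}$, both $W$ and $W^{-1}$ preserve left-boundedness, so by \Cref{lem:ident_left_bounded} the unitary $W$ restricts to a bijection $\alpha\colon F\to F'$ of the von Neumann modules of left-bounded vectors. This $\alpha$ extends $\alpha_1$, is an $\M$-bimodule map, preserves the $\M$-valued inner product (from $L(\alpha\xi)=WL(\xi)$ one gets $(\alpha\xi|\alpha\zeta)=L(\xi)^\ast W^\ast WL(\zeta)=(\xi|\zeta)$), and satisfies $\alpha\circ\delta=\delta'$ since $\delta(x)\otimes\Omega_\phi\mapsto\delta'(x)\otimes\Omega_\phi$. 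For weak$^\ast$ continuity, recall from \Cref{lem:ident_left_bounded} that the canonical embedding $F\hookrightarrow B(L_2(\M,\phi),\H)$ is $\xi\mapsto L(\xi)$ and identifies $F$ with the $\sigma$-weakly closed subspace of bounded right module maps; by uniqueness of the predual of a von Neumann module this relative $\sigma$-weak topology is the weak$^\ast$ topology of $F$, and under this embedding $\alpha$ becomes $T\mapsto WT$, left multiplication by the fixed operator $W$, which is $\sigma$-weakly continuous.

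Finally, for uniqueness: if $\beta\colon F\to F'$ is another weak$^\ast$ continuous bimodule map with $(\beta\xi|\beta\zeta)=(\xi|\zeta)$ and $\beta\circ\delta=\delta'$, then $\beta(x\delta(y))=x\,\beta(\delta(y))=x\delta'(y)=\alpha(x\delta(y))$, so $\alpha$ and $\beta$ agree on $V$, which is strongly — hence weak$^\ast$ — dense in $F$; as $\{\xi\in F:\alpha(\xi)=\beta(\xi)\}$ is weak$^\ast$ closed, it equals $F$. I expect the main difficulty to be precisely the topological bootstrapping in the middle step — promoting the norm-level isomorphism $\alpha_1$ to a weak$^\ast$ continuous one while keeping track of the interplay between the strong, $\sigma$-weak and weak$^\ast$ topologies — which rests on the duality theory of von Neumann modules (self-duality, uniqueness of preduals, Kaplansky-type density). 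A variant that avoids the correspondence detour is to build the predual map $\alpha_\ast\colon F'_\ast\to F_\ast$ directly from $F_\ast=\M_\ast\otimes_\pi\bar F/\ker\iota$ and set $\alpha=(\alpha_\ast)^\ast$, at the cost of handling the conjugate module $\bar F$ carefully.
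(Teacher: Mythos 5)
Your proposal is correct and follows essentially the same route as the paper: build the unitary $W$ (the paper's $U$) on $F\bar\odot L_2(\M,\phi)$ from the fact that the relevant inner products depend only on $\Gamma$, transport it to a map $\alpha\colon F\to F'$ via the identification of left-bounded vectors in \Cref{lem:ident_left_bounded}, and conclude by density for uniqueness. The only difference is the weak$^\ast$-continuity step, where the paper explicitly constructs a predual map on $\M_\ast\otimes_\pi\bar F'/\ker\iota_{F'}$ while you invoke the realization of $F$ as a $\sigma$-weakly closed space of right module maps together with uniqueness of the predual and $\sigma$-weak continuity of $T\mapsto WT$ — a legitimate minor variant resting on the same duality theory.
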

\begin{proof}
Uniqueness of the map $\alpha$ is obvious from the density assumptions. Let us show existence. By construction, $\{\delta(x)y\mid x,y\in\M\}$ is strongly dense in $F$. Define 
\begin{equation*}
U\colon \{\delta(x)y\otimes\Omega_\phi\mid x,y\in\M\}\to F'\bar\odot L_2(\M,\phi),\,U(\delta(x)y\otimes\Omega_\phi)=\delta'(x)y\otimes\Omega_\phi.
\end{equation*}
Since
\begin{align*}
\langle \delta'(a)x\otimes\Omega_\phi,\delta'(b)y\otimes\Omega_\phi\rangle&=\langle \Omega_\phi,(\delta'(a)x|\delta'(b)y)\Omega_\phi\rangle\\
&=\langle \Omega_\phi,x^\ast(\delta'(a)|\delta'(b))y\Omega_\phi\rangle\\
&=\langle\Omega_\phi,x^\ast\Gamma(a,b)y\Omega_\phi\rangle\\
&=\langle \Omega_\phi,x^\ast(\delta(a)|\delta(b))y\Omega_\phi\rangle\\
&=\langle \delta(a)x\otimes\Omega_\phi,\delta(b)y\otimes\Omega_\phi\rangle
\end{align*}
for all $a,b,x,y\in \M$, the map $U$ extends to a unitary from $F\bar\odot L_2(\M,\phi)$ to $F'\bar\odot L_2(\M,\phi)$.

Taking into account that $x\delta(y)=\delta(xy)-\delta(x)y$, it is easy to check that $U$ is a bimodule map. Hence $U$ maps left-bounded vectors to left-bounded vectors and $L(U\eta)=UL(\eta)$ for $\eta\in L_\infty((F\bar\odot L_2(\M,\phi))_\M,\phi)$.

By \Cref{lem:ident_left_bounded} there exists a unique map $\alpha\colon F\to F'$ such that $\alpha(\xi)\otimes\Omega_\phi=U(\xi\otimes\Omega_\phi)$ for all $\xi\in F$. In particular, $\alpha$ is a bijective bimodule map. Moreover,
\begin{equation*}
\alpha(\delta(x))\otimes\Omega_\phi=U(\delta(x)\otimes\Omega_\phi)=\delta'(x)\otimes\Omega_\phi,
\end{equation*}
which implies $\alpha\circ\delta=\delta'$, and
\begin{align*}
(\alpha(\xi)|\alpha(\zeta))&=L(\alpha(\xi)\otimes\Omega_\phi)^\ast L(\alpha(\zeta)\otimes\Omega_\phi)\\
&=L(U(\xi\otimes\Omega_\phi))^\ast L(U(\zeta\otimes\Omega_\phi))\\
&=L(\xi\otimes\Omega_\phi)^\ast U^\ast U L(\zeta\otimes\Omega_\phi)\\
&=(\xi|\zeta).
\end{align*}
It remains to show that $\alpha$ is weak$^\ast$ continuous. Recall the definition of $F_\ast$ given above. Let $\iota_F$ denote the canonical map from $\M_\ast\otimes_\pi \bar F$ to $F^\ast$ and write $\iota_{F'}$ for the analog map for $F'$.

Since
\begin{equation*}
\sum_k \norm{\omega_k}\norm{\alpha^{-1}(\xi_k)}=\sum_k\norm{\omega_k}\norm{\xi_k},
\end{equation*}
the map 
\begin{equation*}
\M_\ast\otimes \bar F'\to \M_\ast\otimes \bar F,\,\omega\otimes\xi\mapsto \omega\otimes \alpha^{-1}(\xi)
\end{equation*}
extends to an isometry $\beta\colon \M_\ast\otimes_\pi \bar F'\to \M_\ast\otimes_\pi \bar F$. Moreover, as
\begin{equation*}
\iota_F (\omega\otimes \alpha^{-1}(\xi))(\zeta)=\omega((\alpha^{-1}(\xi)|\zeta))=\omega((\xi|\alpha(\zeta)))=\iota_{F'}(\omega\otimes\xi)(\alpha(\zeta)),
\end{equation*}
the map $\beta$ maps $\ker \iota_{F'}$ to $\ker\iota_F$ and the quotient map $\alpha_\ast\colon F'_\ast\to F_\ast$ is a predual of $\alpha$. Hence $\alpha$ is weak$^\ast$ continuous.
\end{proof}

\begin{remark}\label{rmk:deriv_CS}
Assume that $\phi$ is a trace. By \cite{CS03}, there exists an $\M$-$\M$-correspondence $\H$ and a closable derivation $\partial\colon\M\to\H$ such that $\phi(\Gamma(x))=\norm{\delta(x)}^2$. One can show that 
\begin{equation*}
\norm{\partial(x)y}^2=\phi(\Gamma(x)yy^\ast).
\end{equation*}
This implies
\begin{equation*}
\norm{\partial(x)y}^2\leq \norm{\Gamma(x)}\phi(yy^\ast).
\end{equation*}
Hence $\partial(x)\in L_\infty(\H_\M,\phi)$ and $L(\partial(x))^\ast L(\partial(x))=\Gamma(x)$. Note that since $\phi$ is a trace, $L(\eta)x\Omega_\phi=\eta x$. If we view $L_\infty(\H_\M,\phi)$ as a von Neumann $\M$-bimodule as described in \Cref{lem:ident_left_bounded}, $\partial$ is thus a derivation with values in $L_\infty(\H_\M,\phi)$. Therefore $F$ can be realized as a subbimodule of $L_\infty(\H_\M,\phi)$ and $\delta$ as a corestriction of $\partial$.
\end{remark}

\begin{example}[Semigroups of Herz--Schur multipliers]\label{ex:groups}
Let $G$ be a discrete group. A conditionally negative definite length function on $G$ is a map $\ell\colon G\to[0,\infty)$ such that $\ell(e)=0$, $\ell(g^{-1})=\ell(g)$ and
\begin{equation*}
\sum_{g,h\in G}\bar\alpha_g\alpha_h\ell(g^{-1}h)\leq 0
\end{equation*}
for every $\alpha\in C_c(G)$ with $\sum_{g\in G}\alpha_g=0$. If $\ell$ is a negative definite length function, the map $P_t$ on $\IC[G]$ defined by $P_t \lambda_g=e^{-t\ell(g)}\lambda_g$ extends to a normal unital completely positive map on the group von Neumann algebra $L(G)$, still denoted by $P_t$. The operators $P_t$ form a $\tau$-symmetric quantum Markov semigroup, where $\tau$ is the trace on $L(G)$ given by $\tau(x)=\langle \delta_e,x\delta_e\rangle$.

For every conditionally negative definite length function $\ell$ there exists a (real) Hilbert space $H$, an orthogonal representation $\pi$ of $G$ on $H$ and a map $b\colon G\to H$ satisfying the $1$-cocycle condition
\begin{equation*}
b(gh)=b(g)+\pi(g)b(h)
\end{equation*}
such that $\norm{b(g)-b(h)}^2=\ell(g^{-1}h)$.

In this case, the $L(G)$-$L(G)$-correspondence $\H$ from \Cref{rmk:deriv_CS} is a subcorrespondence of $\H= H_\IC\otimes\ell_2(G)$, where $H_\IC$ is an $L(G)$-$\IC$-correspondence through the action of $L(G)$ on $H_\IC$ induced by $\pi$. The derivation $\partial$ is given by $\partial(\lambda_g)=\delta_g\otimes b(g)$ \cite[Section 10.2]{CS03}.

The semigroup $(P_t)$ is uniformly continuous if and only if $\ell$ is bounded. In this case, according to \Cref{rmk:deriv_CS} the von Neumann $L(G)$-bimodule $F$ can be realized as a subbimodule of $H_\IC\otimes L(G)$ with the standard inner product and right action and the left action given by $\lambda_g(\eta\otimes\lambda_h)=\pi(g)\eta\otimes\lambda_{gh}$. The derivation $\delta$ is then given by $\delta(\lambda_g)=b(g)\otimes \lambda_g$.
\end{example}

So far, the construction of $F$ and $\delta$ works for arbitrary quantum Markov semigroups and does not use GNS-symmetry. We will next show that GNS-symmetry gives rise to some extra structure on $F$.

\begin{proposition}\label{prop:Tomita_bimodule}
Let $(P_t)$ be a uniformly continuous GNS-symmetric quantum Markov semigroup. There exist a unique semigroup $(V_t)$ of weak$^\ast$ continuous isometries of $F$ and a unique anti-unitary involution $\J$ on $F\bar\odot L_2(\M,\phi)$ such that
\begin{enumerate}[(a)]
\item $V_t(\delta(x))=\delta(\sigma^\phi_t(x))$ for all $x\in \M$, $t\in \IR$,
\item $V_t(x\xi y)=\sigma^\phi_t(x)(V_t \xi)\sigma^\phi_t(y)$ for all $x,y\in \M$, $\xi\in F$, $t\in\IR$,
\item $\J(\delta(x)\otimes\Omega_\phi)=\delta(\sigma^\phi_{i/2}(x)^\ast)\otimes\Omega_\phi$ for all $x\in \M_{\sigma^\phi}$,
\item $\J(x\eta y)=y^\ast (\J\eta)x^\ast$ for all $x,y\in \M$, $\eta\in F\bar\odot L_2(\M,\phi)$.
\end{enumerate}
Moreover, for every $t\in \IR$ the map
\begin{equation*}
F\otimes \Omega_\phi\to F\bar\odot L_2(\M,\phi),\,\xi\otimes \Omega_\phi\mapsto V_t\xi\otimes\Omega_\phi
\end{equation*}
extends to a unitary $U_t$ on $F\bar\odot L_2(\M,\phi)$ and
\begin{enumerate}[(a)]
\setcounter{enumi}{4}
\item $\J\circ U_t=U_t\circ \J$ for all $t\in \IR$.
\end{enumerate}
\end{proposition}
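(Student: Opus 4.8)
The plan is to bootstrap everything from the uniqueness statement \Cref{prop:F_unique} together with the modular theory of $\phi$. Symmetry enters only through two kinds of identities, which I record at the outset. Since each $P_t$ commutes with $\sigma^\phi$ by \Cref{prop:mod_group_commute}, the bounded generator $\L$ commutes with $\sigma^\phi_s$ for every $s\in\IR$, hence so does the carré du champ, $\sigma^\phi_s(\Gamma(x,y))=\Gamma(\sigma^\phi_s(x),\sigma^\phi_s(y))$; and GNS-symmetry together with $\L(1)=0$ and reality of $\L$ gives the scalar identities $\phi(\L(x)y)=\phi(x\L(y))$ and $\phi(\Gamma(x,y))=\phi(x^\ast\L(y))$. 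Beyond these I only use the KMS condition for $\phi$ and the invariance $\phi\circ\sigma^\phi_z=\phi$ on analytic elements.

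First I construct the $V_t$. Fix $t\in\IR$ and let $F^{(t)}$ be the underlying space of $F$ with the modified structure $x\cdot\xi:=\sigma^\phi_{-t}(x)\xi$, $\xi\cdot y:=\xi\,\sigma^\phi_{-t}(y)$, $(\xi|\eta)^{(t)}:=\sigma^\phi_t((\xi|\eta))$. Because $\sigma^\phi_t$ is implemented on $L_2(\M,\phi)$ by $\Delta^{it}$, the map $\xi\otimes\eta\mapsto\xi\otimes\Delta^{-it}\eta$ is a unitary from $F^{(t)}\bar\odot L_2(\M,\phi)$ onto $F\bar\odot L_2(\M,\phi)$; it follows that $F^{(t)}$ is again a von Neumann $\M$-bimodule with the same strong and weak$^\ast$ topologies as $F$, and that $\delta\circ\sigma^\phi_{-t}$ is a derivation into $F^{(t)}$ with $(\delta(\sigma^\phi_{-t}(x))|\delta(\sigma^\phi_{-t}(y)))^{(t)}=\Gamma(x,y)$ and $\{\delta(\sigma^\phi_{-t}(x))\cdot y\}$ strongly dense. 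Applying \Cref{prop:F_unique} to $F^{(t)}$ and $\delta\circ\sigma^\phi_{-t}$ yields a weak$^\ast$ continuous bimodule isomorphism $\alpha_t\colon F\to F^{(t)}$, and I set $V_t:=\alpha_t^{-1}$. Unwinding definitions, $V_t$ is a weak$^\ast$ continuous bijection of $F$ with $(V_t\xi|V_t\eta)=\sigma^\phi_t((\xi|\eta))$ (in particular a $\norm{\cdot}_F$-isometry) satisfying (a) and (b). That $V_sV_t=V_{s+t}$, $V_0=\id$, and that $(V_t)$ is unique all follow because (a) and (b) pin down a weak$^\ast$ continuous operator on the strongly dense set $\{x\delta(y)z\}$.

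Next, the map $\xi\otimes\eta\mapsto V_t\xi\otimes\Delta^{it}\eta$ respects the balancing relation $\xi b\otimes\eta=\xi\otimes b\eta$ by (b), and is isometric by the identity $(V_t\xi|V_t\eta)=\sigma^\phi_t((\xi|\eta))$ together with $\Delta^{it}a\Delta^{-it}=\sigma^\phi_t(a)$; its inverse is the analogous map for $-t$, so it extends to a unitary $U_t$ on $F\bar\odot L_2(\M,\phi)$, and $U_t(\xi\otimes\Omega_\phi)=V_t\xi\otimes\Omega_\phi$ since $\Delta^{it}\Omega_\phi=\Omega_\phi$. For later use one records that $U_t(x\delta(a)\otimes\Omega_\phi y)=\sigma^\phi_t(x)\delta(\sigma^\phi_t(a))\otimes\Omega_\phi\sigma^\phi_t(y)$ for $a,y\in\M_{\sigma^\phi}$.

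The construction of $\J$ is the crux. Let $\mathcal D:=\operatorname{span}\{x\delta(a)\otimes\Omega_\phi y:a,x,y\in\M_{\sigma^\phi}\}$; a routine approximation argument shows $\mathcal D$ is dense in $F\bar\odot L_2(\M,\phi)$. Properties (c) and (d) force the conjugate-linear map $\J_0$ on $\mathcal D$ given by $\J_0(x\delta(a)\otimes\Omega_\phi y)=y^\ast\delta(\sigma^\phi_{i/2}(a)^\ast)\otimes\Omega_\phi x^\ast$, and I claim it extends to an anti-unitary. Using that the left and right actions on the correspondence are $\ast$-representations and that $\Omega_\phi\cdot w=\sigma^\phi_{-i/2}(w)\Omega_\phi$ for analytic $w$, the assertions that $\J_0$ is well defined and anti-isometric both reduce to the ``twisted GNS-symmetry'' identity: for $a,b\in\M_{\sigma^\phi}$ and analytic $p,q$,
\begin{multline*}
\phi\bigl(\bigl[\Gamma(\sigma^\phi_{i/2}(a)^\ast,\,p\,\sigma^\phi_{i/2}(b)^\ast)-\Gamma(\sigma^\phi_{i/2}(a)^\ast,\,p)\,\sigma^\phi_{i/2}(b)^\ast\bigr]\,\sigma^\phi_{-i/2}(q)\bigr)\\
=\phi\bigl(\bigl[\Gamma(b,\,qa)-\Gamma(b,\,q)\,a\bigr]\,\sigma^\phi_{-i/2}(p)\bigr),
\end{multline*}
whose case $p=q=1$ reads $\phi(\Gamma(\sigma^\phi_{i/2}(a)^\ast,\sigma^\phi_{i/2}(b)^\ast))=\phi(\Gamma(b,a))$. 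Once $\Gamma$ is expanded via its definition, this identity follows by repeated use of $\phi(\L(x)y)=\phi(x\L(y))$, $\L\circ\sigma^\phi_z=\sigma^\phi_z\circ\L$, the KMS relation and $\phi\circ\sigma^\phi_z=\phi$; organizing this bookkeeping so that all modular twists cancel is the main obstacle, everything else being formal. Granting it, $\J_0$ extends to an anti-unitary $\J$; the involutivity $\J^2=\id$ holds on $\mathcal D$ because $\sigma^\phi_{i/2}(\sigma^\phi_{i/2}(a)^\ast)^\ast=a$, (c) is immediate, and (d) passes from $\M_{\sigma^\phi}$ to all of $\M$ by $\ast$-strong approximation together with continuity of the two actions on bounded sets. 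Finally, for (e) one checks on $\mathcal D$ that $\J U_t$ and $U_t\J$ both send $x\delta(a)\otimes\Omega_\phi y$ to $\sigma^\phi_t(y)^\ast\delta(\sigma^\phi_{t-i/2}(a^\ast))\otimes\Omega_\phi\sigma^\phi_t(x)^\ast$, and density and boundedness finish the argument; uniqueness of $\J$ is again immediate from (c) and (d).
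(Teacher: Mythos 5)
Your construction of $(V_t)$ is correct and takes a genuinely different route from the paper: you obtain $V_t$ by applying the uniqueness statement \Cref{prop:F_unique} to the twisted bimodule $F^{(t)}$ and the derivation $\delta\circ\sigma^\phi_{-t}$, whereas the paper directly checks that $\sum_j a_j\otimes x_j\otimes\Omega_\phi\mapsto\sum_j\sigma^\phi_t(a_j)\otimes\sigma^\phi_t(x_j)\otimes\Omega_\phi$ is unitary on $F\bar\odot L_2(\M,\phi)$ (using $\L\circ\sigma^\phi_t=\sigma^\phi_t\circ\L$ and $\phi\circ\sigma^\phi_t=\phi$) and then restricts to left-bounded vectors. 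Your packaging is tidy and gets weak$^\ast$ continuity for free from \Cref{prop:F_unique}, at the cost of having to verify that $F^{(t)}$ is again a von Neumann bimodule; the underlying input (commutation of $\L$ with the modular group) is the same. The treatment of $U_t$ and of property (e) is fine.

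The construction of $\J$, however, has a genuine gap. Well-definedness and anti-isometry of $\J_0$ is exactly the point where GNS-symmetry of $\L$ (and not merely commutation with $\sigma^\phi$) enters, and it is the mathematical core of the proposition. You reduce it to a ``twisted GNS-symmetry'' identity for $\Gamma$, state that identity, and then assert that it ``follows by repeated use of'' the listed ingredients while conceding that ``organizing this bookkeeping \ldots is the main obstacle.'' That obstacle is the proof; nothing in your write-up certifies that the modular twists actually cancel, and in computations of this type a misplaced $\sigma^\phi_{\pm i/2}$ is easy to produce and hard to detect. (Your special case $p=q=1$ does check out, but the general case with the extra parameters $p,q$ is where the bookkeeping becomes delicate.) The paper carries out precisely this verification, and does so in a form that keeps it manageable: it works on $F_0=\{\sum_j a_j\otimes x_j:\sum_j a_jx_j=0\}$, where the inner product is $-\tfrac12\,x^\ast\L(a^\ast b)\,y$, so the four-term expressions $\Gamma(a,xb)-\Gamma(a,x)b$ that appear in your formulation collapse to a single term in $\L$, and the anti-isometry of $\sum_j a_j\otimes x_j\otimes\Omega_\phi\mapsto-\sum_j\sigma^\phi_{i/2}(x_j)^\ast\otimes\sigma^\phi_{i/2}(a_j)^\ast\otimes\Omega_\phi$ becomes a short explicit chain of applications of GNS-symmetry, the KMS condition and $\L\circ\sigma^\phi_z=\sigma^\phi_z\circ\L$. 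To complete your argument you must either carry out the verification of your identity in full or reformulate on $F_0$ as the paper does; as written, the existence of $\J$ is not established.
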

\begin{proof}
Since the linear hull of $\{x\delta(y)\mid x,y\in \M\}$ is strongly dense in $F$ and $F\otimes \Omega_\phi$ is dense in $F\bar\odot L_2(\M,\phi)$, uniqueness is clear.

Let us start with the existence of $V_t$. We will use that $\L$ commutes with modular group by \Cref{prop:mod_group_commute}. Let $a_j,x_j,b_k,y_k\in \M$ with $\sum_j a_jx_j=\sum_k b_k y_k=0$ and $t\in \IR$. As
\begin{align*}
&\quad\;\left\langle\sum_j \sigma^\phi_t(a_j)\otimes\sigma^\phi_t(x_j)\otimes\Omega_\phi,\sum_k\sigma^\phi_t(b_k)\otimes\sigma^\phi_t(y_k)\otimes\Omega_\phi\right\rangle\\
&=-\frac 1 2\sum_{j,k}\phi(\sigma^\phi_t(x_j^\ast)\L(\sigma^\phi_t(a_j^\ast b_k))\sigma^\phi_t(y_k))\\
&=-\frac 1 2\sum_{j,k}\phi(\sigma^\phi_t(x_j^\ast\L(a_j^\ast b_k)y_k))\\
&=\left\langle\sum_j a_j\otimes x_j\otimes\Omega_\phi,\sum_k b_k\otimes y_k\otimes\Omega_\phi\right\rangle,
\end{align*}
the map
\begin{equation*}
\sum_j a_j\otimes x_j\otimes\Omega_\phi\mapsto\sum_j \sigma^\phi_t(a_j)\otimes \sigma^\phi_t(x_j)\otimes\Omega_\phi
\end{equation*}
extends to a unitary $U_t$ on $F\bar\odot L_2(\M,\phi)$ and
\begin{equation*}
U_t(x\eta y)=\sigma^\phi_t(x)(U_t\eta)\sigma^\phi_t(y)
\end{equation*}
for all $x,y\in \M$, $\eta\in F\bar\odot L_2(\M,\phi)$, $t\in\IR$. In particular, if $\eta\in L_\infty(_\M\H,\phi)$, then $U_t\eta\in L_\infty(\H_\M,\phi)$ and $L(U_t \eta)=L(\eta)\Delta_\phi^{-it}$.

Thus $U_t$ restricts to a map from $L_\infty(\H_\M,\phi)$ to itself with $\norm{L(U_t \eta)}= \norm{L(\eta)}$. The existence of an isometry $V_t$ on $F$ with properties (a) and (b) as well as the relation $U_t(\xi\otimes\Omega_\phi)=V_t\xi\otimes\Omega_\phi$ follows from the identifications from \Cref{lem:ident_left_bounded}. Note that we have
\begin{equation*}
(V_t\xi|V_t\zeta)=L(U_t(\xi\otimes\Omega_\phi))^\ast L(U_t(\zeta\otimes\Omega_\phi))=\Delta_\phi^{it}L(\xi\otimes\Omega_\phi)^\ast L(\zeta\otimes\Omega_\phi)\Delta_\phi^{-it}=\sigma^\phi_t((\xi|\zeta)).
\end{equation*}

It just remains to show that $V_t$ is weak$^\ast$ continuous. The argument is as the one for the weak$^\ast$ continuity of $\alpha$ in the proof of \Cref{prop:F_unique}.

Since
\begin{align*}
\sum_k \norm{\omega_k\circ\sigma^\phi_t}\norm{V_{-t}\zeta_k}=\sum_k\norm{\omega_k}\norm{\zeta_k},
\end{align*}
the map
\begin{equation*}
\M_\ast\otimes \bar F\to\M_\ast\otimes\bar F,\,\omega\otimes \zeta\mapsto \omega\circ\sigma^\phi_t\otimes V_{-t}\zeta
\end{equation*}
extends to an isometry $W_t$ on $M_\ast\otimes_\pi\bar F$. Moreover, as
\begin{equation*}
\iota(\omega\circ\sigma^\phi_t\otimes V_{-t}\zeta)(\xi)=\omega(\sigma^\phi_t(V_{-t}\zeta|\xi))=\omega((\zeta|V_t\xi))=\iota(\omega\otimes\zeta)(V_t\xi),
\end{equation*}
the map $W_t$ leaves $\ker\iota$ invariant and the quotient map $(V_t)_\ast\colon F_\ast\to F_\ast$ is a predual of $V_t$. Hence $V_t$ is weak$^\ast$ continuous.

Let us now come to the existence of $\J$. We will use $\phi$-symmetry of $\L$ and in particular \Cref{prop:mod_group_commute}.

Let $a_j,x_j,b_k,y_k\in \M_{\sigma^\phi}$ with $\sum_j a_j x_j=\sum_k b_k y_k=0$.
Since
\begin{align*}
&\quad\;\left\langle\sum_k \sigma^\phi_{i/2}(y_k)^\ast\otimes \sigma^\phi_{i/2}(b_k)^\ast\otimes\Omega_\phi,\sum_j \sigma^\phi_{i/2}(x_j)^\ast\otimes \sigma^\phi_{i/2}(a_j)^\ast\otimes\Omega_\phi\right\rangle\\
&=-\frac 1 2\sum_{j,k}\phi(\sigma^\phi_{i/2}(b_k)\L(\sigma^\phi_{i/2}(y_k)\sigma^\phi_{i/2}(x_j)^\ast)\sigma^\phi_{i/2}(a_j)^\ast)\\
&=-\frac 1 2\sum_{j,k}\phi(\L(\sigma^\phi_{i/2}(y_k)\sigma^\phi_{i/2}(x_j)^\ast)\sigma^\phi_{-i/2}(a_j^\ast b_k))\\
&=-\frac 1 2\sum_{j,k}\phi(\sigma^\phi_{i/2}(y_k)\sigma^\phi_{i/2}(x_j)^\ast\L(\sigma^\phi_{-i/2}(a_j^\ast b_k)))\\
&=-\frac 1 2\sum_{j,k}\phi(\sigma^\phi_{-i/2}(x_j^\ast)\sigma^\phi_{-i/2}(\L(a_j^\ast b_k))\sigma^\phi_{-i/2}(y_k))\\
&=-\frac 1 2\sum_{j,k}\phi(x_j^\ast\L(a_j^\ast b_k) y_k)\\
&=\left\langle \sum_j a_j\otimes x_j\otimes\Omega_\phi,b_k\otimes y_k\otimes\Omega_\phi\right\rangle,
\end{align*}
the map
\begin{equation*}
\sum_j a_j\otimes x_j\otimes\Omega_\phi\mapsto -\sum_j \sigma^\phi_{i/2}(x_j)^\ast\otimes\sigma^\phi(a_j)^\ast\otimes\Omega_\phi
\end{equation*}
extends to an anti-unitary operator $\J$ on $F\bar\odot L_2(\M,\phi)$. Properties (c)--(e) as well as $\J^2=\J$ are easy to check.
\end{proof}

\begin{remark}\label{rmk:J_left_bounded}
If $\eta\in L_\infty((F\bar\odot L_2(\M,\phi))_\M,\phi)$ and $x\in\M$, then
\begin{equation*}
\norm{x\J\eta}=\norm{\J(\eta x^\ast)}=\norm{\eta x^\ast}\leq\norm{L(\eta)}\norm{\Omega_\phi x^\ast}=\norm{L(\eta)}\norm{x\Omega_\phi}.
\end{equation*}
Thus $\J$ maps left-bounded vectors to right-bounded vectors (and vice versa).
\end{remark}

\begin{remark}
If we write $L$ and $R$ for the left and right action of $\M$ and $\M^\circ$ on $F\bar\odot L_2(\M,\phi)$, respectively, then property (b) can be expressed as $L(\sigma^{\phi}_t(x))=U_t L(x) U_{-t}$ and $R(\sigma^\phi_t(x))=U_t R(x) U_{-t}$. In other words, $(L,(U_t)_{t\in\IR})$ and $(R,(U_t)_{t\in\IR})$ are covariant representations of $(\M,\IR,\sigma^{\phi})$ and $(\M^\circ,\IR,\sigma^\phi)$, respectively.
\end{remark}

\begin{remark}\label{rmk:diff_calc}
If $\phi$ is a trace, the map 
\begin{equation*}
\partial\colon \M\to F\bar\odot L_2(\M,\phi),\,x\mapsto\delta(x)\otimes\Omega_\phi
\end{equation*}
satisfies $\partial(xy)=x\partial(y)+\partial(x)y$ and $\partial(x^\ast)=\J\partial(x)$. Thus the structure obtained in the previous proposition can be seen as a non-tracial version of Cipriani's and Sauvageot's first-order differential calculus \cite{CS03} in the uniformly continuous case. Note that if $\phi$ is a trace, then $V_t=\mathrm{id}$ for all $t\in\IR$, so that the group $(V_t)$ is a feature of the non-tracial case.
\end{remark}

We record one observation from the proof of \Cref{prop:Tomita_bimodule}, as it will be needed later.

\begin{lemma}\label{lem:mod_group_inner_prod}
If $\xi,\zeta\in F$ and $t\in\IR$, then 
\begin{equation*}
\sigma^\phi_t((\xi|\zeta))=(V_t\xi|V_t\zeta).
\end{equation*}
In particular, if $\xi,\zeta$ are invariant under $(V_t)$ and $x\in \M_{\sigma^\phi}$, then $(\xi|x\zeta)\in \M_{\sigma^\phi}$ and
\begin{equation*}
\sigma^\phi_z((\xi|x\zeta))=(\xi|\sigma^\phi_z(x)\zeta)
\end{equation*}
for all $z\in\IC$.
\end{lemma}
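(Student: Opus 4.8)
The plan is to read the first identity straight off the proof of \Cref{prop:Tomita_bimodule} and then bootstrap the complex statement by analytic continuation. For the identity $\sigma^\phi_t((\xi|\zeta))=(V_t\xi|V_t\zeta)$, I would recall that in that proof the unitary $U_t$ on $F\bar\odot L_2(\M,\phi)$ was shown to restrict to the left-bounded vectors with $V_t\xi\otimes\Omega_\phi=U_t(\xi\otimes\Omega_\phi)$ and $L(U_t\eta)=L(\eta)\Delta_\phi^{-it}$. Since by \Cref{lem:ident_left_bounded} the $\M$-valued inner product on $F$ is recovered as $(\xi|\zeta)=L(\xi\otimes\Omega_\phi)^\ast L(\zeta\otimes\Omega_\phi)$, one computes $(V_t\xi|V_t\zeta)=\Delta_\phi^{it}L(\xi\otimes\Omega_\phi)^\ast L(\zeta\otimes\Omega_\phi)\Delta_\phi^{-it}=\Delta_\phi^{it}(\xi|\zeta)\Delta_\phi^{-it}=\sigma^\phi_t((\xi|\zeta))$, using that $\sigma^\phi$ is spatially implemented by $(\Delta_\phi^{it})$ on $L_2(\M,\phi)$. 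This computation is in fact displayed verbatim in the proof of \Cref{prop:Tomita_bimodule}, so this step amounts to a pointer.

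For the second assertion, suppose $V_t\xi=\xi$ and $V_t\zeta=\zeta$ for all $t\in\IR$ and fix $x\in\M_{\sigma^\phi}$. Property (b) of \Cref{prop:Tomita_bimodule} with trivial right factor gives $V_t(x\zeta)=\sigma^\phi_t(x)(V_t\zeta)=\sigma^\phi_t(x)\zeta$, so applying the first identity to the pair $(\xi,x\zeta)$ yields $\sigma^\phi_t((\xi|x\zeta))=(\xi|\sigma^\phi_t(x)\zeta)$ for every real $t$.

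Finally, to pass to complex arguments I would use that $x$ being analytic for $\sigma^\phi$ means $z\mapsto\sigma^\phi_z(x)$ is a norm-analytic $\M$-valued function on all of $\IC$; post-composing with the bounded linear map $y\mapsto(\xi|y\zeta)$ on $\M$ produces an entire $\M$-valued function $z\mapsto(\xi|\sigma^\phi_z(x)\zeta)$ whose restriction to $\IR$ is $t\mapsto\sigma^\phi_t((\xi|x\zeta))$. By definition this exhibits $(\xi|x\zeta)$ as an analytic element, i.e.\ $(\xi|x\zeta)\in\M_{\sigma^\phi}$, and uniqueness of analytic continuation identifies $\sigma^\phi_z((\xi|x\zeta))$ with $(\xi|\sigma^\phi_z(x)\zeta)$ for all $z\in\IC$. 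There is no real obstacle here: the first identity is already in hand, and the only point requiring mild care is the analytic-continuation step, where one needs that $y\mapsto(\xi|y\zeta)$ is norm-continuous on $\M$ so that its composition with the $\M$-valued holomorphic map $z\mapsto\sigma^\phi_z(x)$ is again holomorphic.
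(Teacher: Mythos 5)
Your proposal is correct and matches the paper's intent exactly: the identity $\sigma^\phi_t((\xi|\zeta))=(V_t\xi|V_t\zeta)$ is precisely the displayed computation $L(U_t\eta)=L(\eta)\Delta_\phi^{-it}$ together with $(\xi|\zeta)=L(\xi\otimes\Omega_\phi)^\ast L(\zeta\otimes\Omega_\phi)$ from the proof of \Cref{prop:Tomita_bimodule}, which is why the paper merely records the lemma without a separate proof. Your derivation of the ``in particular'' part via property (b) and analytic continuation through the bounded map $y\mapsto(\xi|y\zeta)$ is the standard and intended argument.
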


Since $(P_t)$ is uniformly bounded, the derivation $\delta\colon \M\to F$ is bounded. It is a crucial step in the proof of the Christensen--Evans theorem to show that this implies that $\delta$ is inner \cite[Theorem 2.1]{CE79}. For $\zeta\in F$ we will write 
\begin{equation*}
\delta_\zeta\colon \M\to F,\,x\mapsto x\zeta-\zeta x.
\end{equation*}
We will show in two steps that GNS-symmetry of $(P_t)$ implies that $\delta$ is of the form $\delta_\xi$ with a vector $\xi\in F$ that is invariant under the maps $V_t$ and $\J$ from \Cref{prop:Tomita_bimodule}. This will be instrumental in showing that the completely positive map in the Christensen--Evans theorem can be chosen GNS-symmetric.

\begin{lemma}\label{lem:V_t_invariant}
There exists $\xi'\in F$ such that $V_t\xi'=\xi'$ for all $t\in\IR$ and $\delta_{\xi'}=\delta$.
\end{lemma}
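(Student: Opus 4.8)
The plan is to first produce \emph{some} vector $\zeta\in F$ implementing $\delta$ as $\delta_\zeta$, using the Christensen--Evans innerness result, and then average $\zeta$ over the isometry group $(V_t)$ to obtain a $V_t$-invariant implementing vector. Since $(P_t)$ is uniformly continuous, the derivation $\delta\colon\M\to F$ is bounded, and $F$ is a von Neumann $\M$-bimodule, hence (via the embedding into $B(L_2(\M,\phi),F\bar\odot L_2(\M,\phi))$ and the correspondence $\H=F\bar\odot L_2(\M,\phi)$) sits inside an algebra of operators for which the Christensen--Evans argument applies. So first I would invoke \cite[Theorem~2.1]{CE79} (or rather its proof, which shows a bounded derivation of $\M$ into a dual normal bimodule is inner) to get $\zeta\in F$ with $\delta(x)=x\zeta-\zeta x$ for all $x\in\M$. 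One should be slightly careful that the target here is the von Neumann bimodule $F$, not an algebra; but $F$ embeds as a corner of the linking algebra $\begin{pmatrix}\L_\M(F)&F\\ \bar F&\M\end{pmatrix}$, and under this embedding $\delta$ becomes (the off-diagonal part of) a bounded derivation into a dual normal bimodule over $\M$, so innerness follows.

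Next I would average. Recall from \Cref{prop:Tomita_bimodule} that $(V_t)$ is a weak$^\ast$-continuous semigroup (in fact group) of isometries of $F$ satisfying $V_t(x\eta y)=\sigma^\phi_t(x)(V_t\eta)\sigma^\phi_t(y)$ and $V_t(\delta(x))=\delta(\sigma^\phi_t(x))$. Applying $V_t$ to $\delta(x)=x\zeta-\zeta x$ gives
\begin{equation*}
\delta(\sigma^\phi_t(x))=\sigma^\phi_t(x)(V_t\zeta)-(V_t\zeta)\sigma^\phi_t(x),
\end{equation*}
and since $\sigma^\phi_t$ is an automorphism of $\M$, replacing $x$ by $\sigma^\phi_{-t}(x)$ shows $\delta=\delta_{V_t\zeta}$ for every $t$. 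Hence $\delta_{V_t\zeta-\zeta}=0$, i.e. $V_t\zeta-\zeta$ lies in the center $Z:=\{\eta\in F: x\eta=\eta x\ \forall x\in\M\}$ of the bimodule $F$, for all $t\in\IR$. Now $Z$ is a weak$^\ast$-closed subspace of $F$, and any two implementing vectors differ by an element of $Z$; so to correct $\zeta$ into a $V_t$-invariant vector it suffices to produce an element $\eta_0\in Z$ with $V_t\zeta-\zeta = V_t\eta_0-\eta_0$ for all $t$ — i.e. to find a $(V_t)$-invariant vector in the affine weak$^\ast$-closed set $\zeta+Z$. The natural candidate is the ergodic average $\xi'=\lim_{T\to\infty}\frac1{2T}\int_{-T}^{T}V_t\zeta\,dt$ (interpreted weak$^\ast$), or, since $(V_t)$ is only a one-parameter group and we want genuine invariance, a weak$^\ast$-cluster point of such averages. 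Because each $V_t\zeta\in\zeta+Z$ and $\zeta+Z$ is weak$^\ast$-closed and convex, any such average and any cluster point stays in $\zeta+Z$, hence still implements $\delta$; and invariance of a cluster point of Cesàro averages under the group is the standard mean-ergodic argument, using weak$^\ast$-continuity of each $V_t$ and weak$^\ast$-compactness of bounded sets (the orbit $\{V_t\zeta\}$ is norm-bounded since the $V_t$ are isometries). Setting $\xi'$ to be this invariant vector gives $V_t\xi'=\xi'$ for all $t$ and $\delta_{\xi'}=\delta$.

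The main obstacle I anticipate is making the ergodic-averaging step fully rigorous in the weak$^\ast$ setting: one must check that the vector-valued integrals $\frac1{2T}\int_{-T}^T V_t\zeta\,dt$ make sense (e.g. as weak$^\ast$ integrals in the dual Banach space $F=(F_\ast)^\ast$, using weak$^\ast$-measurability of $t\mapsto V_t\zeta$, which follows from weak$^\ast$-continuity established in \Cref{prop:Tomita_bimodule}), that they remain in the norm-bounded set $\{\,\eta: \norm\eta\le\norm\zeta\,\}$ (clear, as $V_t$ are isometries and the norm is weak$^\ast$-lower-semicontinuous on the integral), and that a weak$^\ast$-cluster point $\xi'$ is genuinely fixed by every $V_s$ — the usual estimate $\frac1{2T}\int_{-T}^T(V_{s+t}-V_t)\zeta\,dt\to 0$ combined with weak$^\ast$-continuity of $V_s$. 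A cleaner alternative that sidesteps integration is to use a Ryll-Nardzewski or Markov--Kakutani fixed point theorem: the group $\{V_t\}$ acts by weak$^\ast$-continuous affine isometries on the nonempty weak$^\ast$-compact convex set $(\zeta+Z)\cap\{\eta:\norm\eta\le\norm\zeta\}$ (nonempty since it contains $\zeta$, convex and weak$^\ast$-compact as an intersection of a weak$^\ast$-closed affine subspace with a weak$^\ast$-compact ball), so it has a common fixed point $\xi'$; this is the route I would actually write, as it avoids all measurability bookkeeping. Everything else — that $V_t\zeta-\zeta\in Z$, that elements of $Z$ give trivial inner derivations, that $\zeta+Z$ is weak$^\ast$-closed — is routine.
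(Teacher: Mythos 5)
Your proposal is correct and, in the form you say you would actually write it (Markov--Kakutani applied to the weak$^\ast$-compact convex set of norm-bounded implementing vectors, which is invariant under the weak$^\ast$-continuous commuting isometries $V_t$), it is essentially identical to the paper's proof: the paper takes $C=\{\zeta\in F\mid \delta_\zeta=\delta,\ \norm{\zeta}\leq\norm{\xi''}\}$, checks $V_tC\subset C$ by the same computation you give, and invokes Markov--Kakutani. Your set $(\zeta+Z)\cap\{\eta:\norm{\eta}\le\norm{\zeta}\}$ coincides with $C$, so the only differences are cosmetic (the detour through the ergodic average is unnecessary, as you note).
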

\begin{proof}
By \cite[Theorem 2.1]{CE79} there exists $\xi^{\prime\prime}\in F$ such that $\delta_{\xi^{\prime\prime}}=\delta$. Let
\begin{equation*}
C=\{\zeta\in F\mid \delta_\zeta=\delta,\,\norm\zeta\leq\norm{\xi^{\prime\prime}}\}.
\end{equation*}
This set is bounded and weak$^\ast$ closed, hence weak$^\ast$ compact by the Banach-Alaoglu theorem. Moreover it is convex.

If $\zeta\in C$, then
\begin{equation*}
\delta_{V_t\zeta}(x)=x V_t\zeta-(V_t\zeta)x=V_t(\sigma^\phi_{-t}(x)\zeta-\zeta\sigma^\phi_{-t}(x))=V_t(\delta(\sigma^\phi_{-t}))=\delta(x)
\end{equation*}
for all $x\in \M$ and $t\in\IR$ by \Cref{prop:Tomita_bimodule} (a). Moreover, $\norm{V_t\zeta}=\norm{\zeta}\leq\norm{\xi^{\prime\prime}}$. Thus $V_t\zeta\in C$.

Since each $V_t$ is weak$^\ast$ continuous and $V_sV_t=V_{s+t}=V_tV_s$, we can apply the Markov--Kakutani fixed-point theorem to obtain $\xi'\in C$ with $V_t\xi'=\xi'$ for all $t\in\IR$.
\end{proof}
\begin{remark}
In the light of \Cref{lem:mod_group_inner_prod}, $V_t\xi'=\xi'$ implies
\begin{equation*}
\sigma^\phi_t(\xi'|x\xi')=(\xi'|\sigma^\phi_t(x)\xi').
\end{equation*}
\end{remark}

The following result is a standard consequence of interpolation theory for noncommutative $L_p$ spaces.
\begin{lemma}\label{lem:generator_bounded}
For all $x\in\M$ one has
\begin{equation*}
\phi(\L(x)^\ast x)\leq \norm{\L}\phi(x^\ast x).
\end{equation*}
\end{lemma}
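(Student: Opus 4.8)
The guiding idea is that, besides acting boundedly on $\M=L_\infty(\M,\phi)$ with norm $\norm{\L}$, the generator $\L$ also acts boundedly on $L_1(\M,\phi)\cong\M_\ast$ with the same norm, compatibly with the standard embeddings; complex interpolation then yields a bounded extension of $\L$ to $L_2(\M,\phi)$ with norm at most $\norm{\L}$, and the inequality is immediate. The first thing I would record are the two endpoint bounds. On $\M$ the operator $\L$ is bounded with norm $\norm{\L}$ by hypothesis, and its pre-adjoint $\L_\ast$ on $\M_\ast$ is bounded with $\norm{\L_\ast}=\norm{\L}$. By Kosaki's theorem, $L_2(\M,\phi)$ is the complex interpolation space at $\theta=1/2$ of a compatible couple $(\M,\M_\ast)$ for a suitable (``symmetric'') embedding $\M\hookrightarrow\M_\ast$, $x\mapsto\omega_x$, under which $\M\subseteq\M\cap\M_\ast$ and the $L_2$-norm of $x\in\M$ equals $\norm{\Delta_\phi^{1/4}(x\Omega_\phi)}$.

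The crucial point is that $\L$ and $\L_\ast$ define a single operator on this couple, i.e.\ $\L_\ast$ restricts to $\L$ on the embedded copy of $\M$. This is where GNS-symmetry enters: since $\L$ is $\phi$-symmetric, the map $x\Omega_\phi\mapsto\L(x)\Omega_\phi$ is symmetric on $L_2(\M,\phi)$ (as in the proof of \Cref{prop:mod_group_commute}), and since $\L$ moreover commutes with the modular group $\sigma^\phi$ (a consequence of \Cref{prop:mod_group_commute} applied to the maps $P_t$), a short computation with the modular operator shows $\L_\ast\omega_x=\omega_{\L(x)}$. Granting this, complex interpolation gives a bounded operator $\tilde\L$ on $L_2(\M,\phi)=[\M,\M_\ast]_{1/2}$ with $\norm{\tilde\L}\le\norm{\L}^{1/2}\norm{\L}^{1/2}=\norm{\L}$; using once more that $\L$ commutes with $\sigma^\phi$, the symmetric and the left GNS embeddings of $\M$ intertwine $\tilde\L$ identically, so under the usual identification $\tilde\L$ is the (now bounded) closure of $x\Omega_\phi\mapsto\L(x)\Omega_\phi$.

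For $x\in\M$ we then have $\phi(\L(x)^\ast x)=\langle\tilde\L(x\Omega_\phi),x\Omega_\phi\rangle$, and $\phi$-symmetry gives $\phi(\L(x)^\ast x)=\phi(x^\ast\L(x))=\overline{\phi(\L(x)^\ast x)}$, so this number is real. Hence
\[
\phi(\L(x)^\ast x)\le\abs{\langle\tilde\L(x\Omega_\phi),x\Omega_\phi\rangle}\le\norm{\tilde\L}\,\norm{x\Omega_\phi}^2\le\norm{\L}\,\phi(x^\ast x),
\]
which is the claim.

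The only non-formal step is the compatibility $\L_\ast\omega_x=\omega_{\L(x)}$; everything else is routine bookkeeping with the interpolation functor, so this is the step I expect to require genuine care, and it is precisely where the GNS-symmetry hypothesis is used. An equivalent route avoids discussing $\L$ on $\M_\ast$ directly and works with the semigroup instead: each $P_t$ is unital, $\phi$-preserving and commutes with $\sigma^\phi$, hence extends compatibly to a contraction $\tilde P_t$ on every $L_p(\M,\phi)$, and $\norm{\tilde P_t-I}_{B(L_2)}\le\norm{P_t-I}_{B(\M)}^{1/2}\norm{(P_t)_\ast-I}_{B(\M_\ast)}^{1/2}=\norm{P_t-I}_{B(\M)}\to 0$ as $t\searrow 0$; thus $(\tilde P_t)$ is uniformly continuous on $L_2(\M,\phi)$ with generator $\tilde\L$ of norm at most $\lim_{t\searrow 0}t^{-1}\norm{P_t-I}_{B(\M)}=\norm{\L}$, and one concludes as before.
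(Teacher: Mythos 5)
Your proposal is correct and follows essentially the same route as the paper: both establish the endpoint bounds on $\M$ and $\M_\ast$ using GNS-symmetry (the paper phrases the compatibility as $(\L_\ast)^\ast\subset\L$ for the symmetric embedding $i_\phi=j_\phi^\ast Jj_\phi$, which is your $\L_\ast\omega_x=\omega_{\L(x)}$), invoke Kosaki interpolation to get a bounded extension $\L_2$ on $L_2(\M,\phi)$ with norm $\norm{\L}$, use commutation with the modular group to identify $\L_2(x\Omega_\phi)=\L(x)\Omega_\phi$ under the left GNS embedding, and conclude by Cauchy--Schwarz. The alternative semigroup route you sketch at the end is a harmless variation of the same interpolation argument.
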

\begin{proof}
The symmetric embedding of $\M$ into $L_2(\M,\phi)$ is given by 
\begin{equation*}
j_\phi\colon\M\to L_2(\M,\phi),\,x\mapsto \Delta_\phi^{1/4}(x\Omega_\phi),
\end{equation*}
and the symmetric embedding of $\M$ into $\M_\ast$ is $i_\phi=j_\phi^\ast J j_\phi$, where $J$ is viewed as a linear map from $L_2(\M,\phi)$ to $L_2(\M,\phi)^\ast$, identified via the Riesz representation theorem with the conjugate space of $L_2(\M,\phi)$.

By GNS-symmetry, the operator
\begin{equation*}
\L_\ast\colon i_\phi(\M)\to \M_\ast,\,\L_\ast(i_\phi(x))=i_\phi(\L(x))
\end{equation*}
satisfies $(\L_\ast)^\ast\subset \L$. Hence $\L_\ast$ is bounded with $\norm{\L_\ast}=\norm{\L}$.

By interpolation theory for noncommutative $L_p$ spaces, this implies that 
\begin{equation*}
\L_2\colon j_\phi(\M)\to L_2(\M,\phi),\,\L_2(j_\phi(x))=j_\phi(\L(x))
\end{equation*}
is bounded with the same norm. Since $\L$ is GNS-symmetric, it follows that
\begin{equation*}
\L_2(x\Omega_\phi)=\L_2(j_\phi(\sigma^\phi_{i/4}(x)\Omega_\phi))=j_\phi(\L(\sigma^\phi_{i/4}(x)))=\Delta_\phi^{1/4}(\sigma^\phi_{i/4}(\L(x)))\Omega_\phi=\L(x)\Omega_\phi
\end{equation*} 
for all $x\in \M_{\sigma^\phi}$, and thus by density for all $x\in \M$. Therefore
\begin{equation*}
\phi(\L(x)^\ast x)=\langle \L_2(x\Omega_\phi),x\Omega_\phi\rangle_2\leq \norm{\L_2}\norm{x\Omega_\phi}_2^2=\norm{\L}\phi(x^\ast x).\qedhere
\end{equation*}
\end{proof}

\begin{lemma}\label{lem:all_invariant}
There exists $\xi\in F$ such $V_t\xi=\xi$ for all $t\in \IR$, $\J(\xi\otimes\Omega_\phi)=\xi\otimes\Omega_\phi$ and $\delta_{i\xi}=\xi$.
\end{lemma}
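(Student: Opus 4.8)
The plan is to take the $(V_t)$-invariant vector $\xi'\in F$ from \Cref{lem:V_t_invariant} (so $V_t\xi'=\xi'$ and $\delta_{\xi'}=\delta$) and symmetrize it with respect to $\J$ after passing to $F\bar\odot L_2(\M,\phi)$. Write $\eta'=\xi'\otimes\Omega_\phi$ and $\partial(x)=\delta(x)\otimes\Omega_\phi$. From $\delta=\delta_{\xi'}$ and $(\xi'x)\otimes\Omega_\phi=(\xi'\otimes\Omega_\phi)\sigma^\phi_{i/2}(x)$ for $x\in\M_{\sigma^\phi}$ one gets $x\eta'-\eta'\sigma^\phi_{i/2}(x)=\partial(x)$ on $\M_{\sigma^\phi}$. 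Applying $\J$ and invoking properties (c) and (d) of \Cref{prop:Tomita_bimodule}, together with this identity for $\sigma^\phi_{i/2}(x)^\ast$ in place of $x$, one obtains $x(\J\eta')-(\J\eta')\sigma^\phi_{i/2}(x)=-\partial(x)$, so that $\mu:=\eta'+\J\eta'$ satisfies $x\mu=\mu\sigma^\phi_{i/2}(x)$ for all $x\in\M_{\sigma^\phi}$. The candidate vector is $\eta:=\frac{1}{2i}(\eta'-\J\eta')$, which is fixed by $\J$ because $\J$ is an anti-unitary involution, and fixed by $U_t$ because $U_t\eta'=\eta'$ (as $V_t\xi'=\xi'$) and $\J$ commutes with $U_t$ by property (e).

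The crux is to show $\eta$ is left-bounded, i.e.\ $\eta=\xi\otimes\Omega_\phi$ for some $\xi\in F$; since $\eta'$ is left-bounded, this amounts to $\J\eta'$ being left-bounded, and this is the one point where \Cref{lem:generator_bounded} is needed. For $y\in\M_{\sigma^\phi}$ the identity above with $x=y^\ast$ reads $y^\ast\eta'=\partial(y^\ast)+\eta'\sigma^\phi_{i/2}(y^\ast)$, and by property (d) one has $(\J\eta')y=\J(y^\ast\eta')$, hence $\norm{(\J\eta')y}\le\norm{\partial(y^\ast)}+\norm{\eta'\sigma^\phi_{i/2}(y^\ast)}$. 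Now $\norm{\partial(z)}^2=\phi(\Gamma(z))=\phi(\L(z)^\ast z)\le\norm{\L}\,\norm{z\Omega_\phi}_2^2$ by \Cref{lem:generator_bounded} (using $\phi(\L(z^\ast z))=0$ and reality of $\phi(\L(z)^\ast z)$), while $\norm{\eta'\sigma^\phi_{i/2}(y^\ast)}\le\norm{(\xi'|\xi')}^{1/2}\norm{\Omega_\phi\sigma^\phi_{i/2}(y^\ast)}_2$, and, since $J$ is isometric, $\norm{y^\ast\Omega_\phi}_2=\norm{\Omega_\phi y}_2=\norm{\Omega_\phi\sigma^\phi_{i/2}(y^\ast)}_2$. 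Thus $\norm{(\J\eta')y}\le C\norm{\Omega_\phi y}_2$ for $y\in\M_{\sigma^\phi}$, and hence for all $y\in\M$ by approximation, so $\J\eta'$, and therefore $\eta$, is left-bounded.

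It remains to verify the three properties for the $\xi\in F$ with $\xi\otimes\Omega_\phi=\eta$. Invariance under $\J$ and under $(V_t)$ was observed in the first paragraph. For the last assertion, $\delta_{i\xi}=\delta$ (that is, $i\xi$ implements $\delta$), compute for $x\in\M_{\sigma^\phi}$ that $\delta_{i\xi}(x)\otimes\Omega_\phi=i\bigl(x\eta-\eta\sigma^\phi_{i/2}(x)\bigr)$, using $(\xi x)\otimes\Omega_\phi=(\xi\otimes\Omega_\phi)\sigma^\phi_{i/2}(x)$; then, by the two identities from the first paragraph, $x\eta-\eta\sigma^\phi_{i/2}(x)=\frac{1}{2i}\bigl[\bigl(x\eta'-\eta'\sigma^\phi_{i/2}(x)\bigr)-\bigl(x\J\eta'-(\J\eta')\sigma^\phi_{i/2}(x)\bigr)\bigr]=\frac{1}{2i}\bigl(\partial(x)+\partial(x)\bigr)=\frac{1}{i}\partial(x)$, so $\delta_{i\xi}(x)\otimes\Omega_\phi=\partial(x)=\delta(x)\otimes\Omega_\phi$, whence $\delta_{i\xi}(x)=\delta(x)$ since $\zeta\mapsto\zeta\otimes\Omega_\phi$ is injective; this extends to all $x\in\M$ by weak$^\ast$ continuity. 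I expect the genuine obstacle to be the left-boundedness of $\J(\xi'\otimes\Omega_\phi)$ in the second paragraph: everything else is bookkeeping with \Cref{prop:Tomita_bimodule}, whereas there one really needs the $L_2$-estimate for $\L$ from \Cref{lem:generator_bounded}, the modular identities above, and some care in the passage from $\M_{\sigma^\phi}$ to $\M$.
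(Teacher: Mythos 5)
Your proposal is correct and follows essentially the same route as the paper: take the $(V_t)$-invariant $\xi'$ from \Cref{lem:V_t_invariant}, use the identity $x\eta'=\partial(x)+\eta'\sigma^\phi_{i/2}(x)$ together with \Cref{lem:generator_bounded} to see that $\J\eta'$ is left-bounded (the paper phrases this as ``$\eta'$ is right-bounded, hence $\J\eta'$ is left-bounded'' via \Cref{rmk:J_left_bounded}, which is the same estimate), and then set $\xi=\tfrac{1}{2i}(\xi'-\zeta)$ with $\zeta\otimes\Omega_\phi=\J\eta'$. All the verifications, including the sign bookkeeping giving $x\J\eta'-(\J\eta')\sigma^\phi_{i/2}(x)=-\partial(x)$, match the paper's computation.
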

\begin{proof}
By \Cref{lem:V_t_invariant} there exists $\xi'\in F$ such that $V_t\xi'=\xi'$ for all $t\in\IR$ and $\delta_{\xi'}=\delta$. Let $\H=F\bar\odot L_2(\M,\phi)$ and $\eta=\xi'\otimes\Omega_\phi\in\H$. By \Cref{lem:ident_left_bounded} we have $\eta\in L_\infty(\H_\M,\phi)$. We will show that $\eta\in L_\infty(_\M\H,\phi)$ as well.

For $x\in \M_{\sigma^\phi}$ we have
\begin{align*}
x\eta=x\xi'\otimes\Omega_\phi=\delta(x)\otimes\Omega_\phi+\xi' x\otimes\Omega_\phi=\delta(x)\otimes\Omega_\phi+\eta \sigma^\phi_{i/2}(x)
\end{align*}
and hence
\begin{align*}
\norm{x\eta}_\H&\leq \norm{\eta \sigma^\phi_{i/2}(x)}_\H+\norm{\delta(x)\otimes\Omega_\phi}_\H\\
&\leq \norm{L(\eta)}\norm{\Omega_\phi\sigma^\phi_{i/2}(x)}_2+\langle \Omega_\phi,(\delta(x)|\delta(x))\Omega_\phi\rangle_2\\
&=\norm{\xi'}\norm{x\Omega_\phi}_2+\phi(\L(x)^\ast x)^{1/2}\\
&\leq (\norm{\xi'}+\norm{\L})\norm{x\Omega_\phi}_2
\end{align*}
by \Cref{lem:generator_bounded}. Thus $\eta\in L_\infty(_\M\H,\phi)$ and therefore $\J\eta\in L_\infty(\H_\M,\phi)$ (see \Cref{rmk:J_left_bounded}).

By \Cref{lem:ident_left_bounded} there exists $\zeta\in F$ such that $\J\eta=\zeta\otimes\Omega_\phi$. We have
\begin{equation*}
V_t\zeta\otimes\Omega_\phi=U_t\J\eta=\J( V_t\xi'\otimes\Omega_\phi)=\J(\xi'\otimes\Omega_\phi)=\J\eta=\zeta\otimes\Omega_\phi,
\end{equation*}
where we used $\J U_t=U_t \J$ from \Cref{prop:Tomita_bimodule} (e). Hence $V_t\zeta=\zeta$.

Thus $\xi=\frac 1 {2i}(\xi'-\zeta)$ satisfies $V_t\xi=\xi$ for all $t\in \IR$,
\begin{equation*}
\J(\xi\otimes\Omega_\phi)=\frac i{2}(\J(\xi'\otimes\Omega_\phi)-\J(\zeta\otimes\Omega_\phi))=\frac i{2}(\zeta\otimes\Omega_\phi-\xi'\otimes\Omega_\phi)=\J(\xi\otimes\Omega_\phi)
\end{equation*}
and
\begin{align*}
i(x\xi-\xi x)\otimes\Omega_\phi&=\frac 1 2\delta(x)\otimes\Omega_\phi-\frac 1 2(x\zeta -\zeta x)\otimes\Omega_\phi\\
&=\frac 1 2\delta(x)\otimes\Omega_\phi-\frac 1 2(x\J\eta-(\J\eta)\sigma^\phi_{i/2}(x))\\
&=\frac 1 2\delta(x)\otimes\Omega_\phi-\frac 1 2\J(\eta x^\ast-\sigma^\phi_{i/2}(x)^\ast \eta)\\
&=\frac1  2\delta(x)\otimes\Omega_\phi+\frac 1 2\J((\sigma^\phi_{i/2}(x)^\ast \xi'-\xi' x^\ast)\otimes\Omega_\phi)\\
&=\frac 1 2\delta(x)\otimes\Omega_\phi+\frac1  2\J(\delta(\sigma^\phi_{i/2}(x)^\ast)\otimes\Omega_\phi)\\
&=\delta(x)\otimes\Omega_\phi
\end{align*}
for $x\in \M_{\sigma^\phi}$. Therefore $\delta_{i\xi}=\delta$.
\end{proof}

\begin{remark}
By \Cref{prop:Tomita_bimodule} (b), $\J(\xi\otimes\Omega_\phi)=\xi\otimes\Omega_\phi$ implies
\begin{equation*}
\langle a(\xi\otimes\Omega_\phi)x,b(\xi\otimes \Omega_\phi)y\rangle=\langle \J(b(\xi\otimes \Omega_\phi)y),\J(a(\xi\otimes \Omega_\phi)x)\rangle=\langle y^\ast(\xi\otimes\Omega_\phi)b^\ast,x^\ast(\xi\otimes\Omega_\phi)a^\ast\rangle.
\end{equation*}
\end{remark}

We have now gathered all the necessary ingredients to prove the main result of this section. However, before doing so, we want to show how the von Neumann bimodule $F$ together with the extra structure described in \Cref{prop:Tomita_bimodule} can be lifted to a von Neumann algebra containing $\M$.

This construction is a special case of a result from \cite{JRS}, building on the construction of operator-valued semicircular families in \cite{Shl99}. Since the manuscript \cite{JRS} is unpublished, we give a full proof here. Additionally, since we only deal with uniformly continuous semigroups, we can avoid the ultraproduct construction from \cite{JRS}. Otherwise, the proof follows the same lines as the one in \cite{JRS}, with some minor modifications.

One first observations is that the maps $(V_t)$ and $\J$ take similar roles as the modular automorphism group and modular conjugation on a von Neumann algebra. In fact, this is more than just an analogy, as the following theorem shows. Note that if $\N_1\subset \N_2$ is a von Neumann subalgebra and $\psi$ a normal faithful state on $\N_1$ such that $\sigma^\psi$ leaves $\N_1$ invariant, by Takesaki's theorem \cite[Theorem IX.4.2]{Tak03} there exists a unique $\psi$-preserving conditional expectation $E$ from $\N_2$ to $\N_1$, which is normal. Endowed with the $\N_1$-valued inner product
\begin{equation*}
(a|b)=E(a^\ast b)
\end{equation*}
this makes $\N_2$ into a von Neumann $\N_1$-bimodule and
\begin{equation*}
\N_2\otimes L_2(\N_1,\psi|_{\N_1})\to L_2(\N_2,\psi),\,x\otimes \eta\mapsto x\eta
\end{equation*}
extends to a unitary from $\N_2\bar\odot L_2(\N_1,\psi|_{\N_1})$ to $L_2(\N_2,\psi)$. In the statement of the following theorem we will identify these two spaces via this map.

Moreover, recall that the centralizer of a normal faithful state $\psi$ on a von Neumann algebra $\N$ is the set of all $x\in \N$ such that $\sigma^\psi_t(x)=x$ for all $t\in \IR$, or, equivalently, $\psi(xy)=\psi(yx)$ for all $y\in \N$.

\begin{theorem}\label{thm:Fock_space}
There exists a von Neumann algebra $\hat \M$ containing $\M$ as a von Neumann subalgebra with a normal faithful conditional expectation $E\colon\hat\M\to\M$, a self-adjoint element $a$ of the centralizer of $\phi\circ E$ with $E(a)=0$ and a bimodule map $\alpha\colon F\to\hat \M$ such that
\begin{enumerate}[(a)]
\item $E(\alpha(\zeta_1)^\ast \alpha(\zeta_2))=(\zeta_1|\zeta_2)$ for all $\zeta_1,\zeta_2\in F$,
\item $\alpha(V_t \zeta)=\sigma^{\phi\circ E}_t(\alpha(\zeta))$ for all $\zeta\in F$, $t\in \IR$,
\item $(\alpha\otimes \id)(\J\eta)=J_{\phi\circ E}(\alpha\otimes\id)(\eta)$ for all $\eta\in F\bar\odot L_2(\M,\phi)$,
\item $\alpha(\delta(x))=i[x,a]$ for all $x\in \M$.
\end{enumerate}
\end{theorem}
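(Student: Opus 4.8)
The plan is to build $\hat\M$ as a von Neumann-algebraic completion of the full Fock-type algebra generated by $\M$ and creation/annihilation operators on the $\M$-$\M$-correspondence $\H = F\bar\odot L_2(\M,\phi)$, in the spirit of Shlyakhtenko's operator-valued semicircular construction. Concretely, I would form the Fock module $\mathcal{F}(F) = \M \oplus F \oplus (F\bar\odot F) \oplus \cdots$ as a von Neumann $\M$-module, let $\ell(\zeta)$ be the left creation operator by $\zeta\in F$ (with adjoint $\ell(\zeta)^\ast$ the annihilation operator contracting against the $\M$-valued inner product $(\cdot|\cdot)$), and set $a = \ell(\xi) + \ell(\xi)^\ast$, where $\xi\in F$ is the distinguished vector produced by \Cref{lem:all_invariant}, i.e.\ $V_t\xi=\xi$, $\J(\xi\otimes\Omega_\phi)=\xi\otimes\Omega_\phi$, and $\delta_{i\xi}=\delta$. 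Define $\hat\M$ to be the von Neumann algebra on $\mathcal{F}(F)\bar\odot L_2(\M,\phi)$ generated by the left action of $\M$ and by $a$; the vacuum conditional expectation $E\colon\hat\M\to\M$ is compression to the $\M$-summand, which is normal and faithful because the vacuum vector is separating for $\hat\M$ (standard for Fock constructions over a faithful state). Since $\xi$ is a single self-adjoint "semicircular" generator, $a$ is self-adjoint and $E(a)=0$ is immediate from the fact that $\ell(\xi)$ and $\ell(\xi)^\ast$ shift the degree.

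Next I would define $\alpha\colon F\to\hat\M$ by $\alpha(\zeta) = \ell(\zeta) + \text{(correction terms)}$; in fact the right formula, dictated by property (d), is forced: since $\delta(x)=i[x,a]=i(xa-ax)$ must equal $\alpha(\delta(x))$, and every element of $F$ is a strong limit of combinations $x\delta(y)y'$, one extends $\alpha$ by the bimodule rule $\alpha(x\zeta y)=x\alpha(\zeta)y$ together with $\alpha(\delta(x))=i[x,a]$. One must check this is well-defined, i.e.\ that the relation $x\delta(y)=\delta(xy)-\delta(x)y$ in $F$ is matched by the corresponding relation among commutators $i[x,[y,a]]\ \leftrightarrow\ i[xy,a]-i[x,a]y$ in $\hat\M$ — this is just the Leibniz identity for $[\cdot,a]$. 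Boundedness/normality of $\alpha$ and its extension to all of $F$ follow from property (a), which I would verify by a direct computation: $E(\alpha(\zeta_1)^\ast\alpha(\zeta_2))$ reduces, after using that $E$ kills everything of nonzero Fock degree, to the degree-zero part of the product of an annihilation and a creation operator, which is exactly $(\zeta_1|\zeta_2)$; combined with $\delta_{i\xi}=\delta$ and $(\delta(x)|\delta(y))=\Gamma(x,y)$ this pins down the inner product on the bimodule generated by $\delta$, and since $F$ is generated by that bimodule (strong closure), (a) holds on all of $F$ by continuity.

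For (b) and (c) I would use the $V_t$- and $\J$-invariance of $\xi$ crucially. The modular group $\sigma^{\phi\circ E}$ of the amalgamated state on $\hat\M$ acts trivially on the $\M$-part as $\sigma^\phi$ and must implement the second-quantization of the one-particle modular data; because $\xi$ is $(V_t)$-invariant, the "semicircular element" $a$ is in the centralizer of $\phi\circ E$, and more generally $\sigma^{\phi\circ E}_t(\alpha(\zeta))=\alpha(V_t\zeta)$ by checking it on generators $\alpha(\delta(x))=i[x,a]$: $\sigma^{\phi\circ E}_t(i[x,a]) = i[\sigma^\phi_t(x),a] = \alpha(\delta(\sigma^\phi_t(x))) = \alpha(V_t\delta(x))$ using \Cref{prop:Tomita_bimodule}(a), and then extending by the bimodule/continuity argument and \Cref{lem:mod_group_inner_prod} to control the inner products. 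Similarly the modular conjugation $J_{\phi\circ E}$, second-quantized, acts on $F\bar\odot L_2(\M,\phi)\subset \mathcal F(F)\bar\odot L_2(\M,\phi)$ as an anti-unitary satisfying the bimodule relation in \Cref{prop:Tomita_bimodule}(d); on the generator $\delta(x)\otimes\Omega_\phi$ it sends $i[x,a]$-type vectors to $i[a, x^\ast]$-type vectors (using $\J(\xi\otimes\Omega_\phi)=\xi\otimes\Omega_\phi$ and the anti-multiplicativity of $J_{\phi\circ E}$), which is precisely $\alpha(\delta(\sigma^\phi_{i/2}(x)^\ast))\otimes\Omega_\phi = (\alpha\otimes\id)(\J(\delta(x)\otimes\Omega_\phi))$ by \Cref{prop:Tomita_bimodule}(c). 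The main obstacle I anticipate is the well-definedness and normality of $\hat\M$ and $E$ together with the verification that $E$ is faithful and $(\phi\circ E)$-modular theory is the expected second quantization — i.e.\ making the Fock-space construction rigorous at the von Neumann algebra level (strong closures, normality of the left $\M$-action on the Fock module, separating property of the vacuum); the algebraic identities (a)–(d) are then comparatively routine, being Leibniz-rule bookkeeping on the dense set of generators combined with the three invariance properties of $\xi$.
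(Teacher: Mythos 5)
Your overall strategy is the one the paper follows: form the Fock correspondence over $\H=F\bar\odot L_2(\M,\phi)$ (your Fock module $\mathcal F(F)$ is identified with it via \Cref{lem:rel_tensor_product}), take the semicircular element built from the $\xi$ of \Cref{lem:all_invariant}, let $\hat\M$ be generated by the left action of $\M$ and this element, compress to the vacuum to get $E$, and read off $\alpha$ from the bimodule structure. There is, however, a genuine gap at the step you dismiss as ``standard for Fock constructions over a faithful state'': the faithfulness of $E$, equivalently the separating property of the vacuum $\Omega_\phi$ for $\hat\M$. This is not automatic for a Fock construction over a non-tracial state with a general left-bounded one-particle vector; it is precisely here that the two invariance properties of $\xi$ do their work. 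The paper proves it by exhibiting enough of the commutant: one defines right creation operators $b(\hat\eta)$ with $\hat\eta=\xi\otimes\Omega_\phi$, which are bounded only because $\J\hat\eta=\hat\eta$ forces $\hat\eta$ to be right-bounded as well as left-bounded (\Cref{rmk:J_left_bounded}), and then one verifies that $t(\hat\eta)=b(\hat\eta)+b(\hat\eta)^\ast$ commutes with $s(\hat\eta)$ by a computation using $\sigma^\phi_{-i/2}((\xi|x\xi))=(\xi|\sigma^\phi_{-i/2}(x)\xi)$, i.e.\ \Cref{lem:mod_group_inner_prod}, which in turn rests on $V_t\xi=\xi$. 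Cyclicity of $\Omega_\phi$ for the algebra generated by $t(\hat\eta)$ and the right action of $\M$ then yields the separating property. Your write-up invokes the invariance of $\xi$ only for (b) and (c), never for this step, and without that connection the faithfulness of $E$ and the centralizer property of $a$ (which the paper also deduces from $t(\hat\eta)\in\hat\M'$, not from $V_t$-invariance alone) are unsupported.

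A second, smaller issue: you propose to define $\alpha$ by the bimodule rule together with $\alpha(\delta(x))=i[x,a]$ and to justify well-definedness by the Leibniz identity. The Leibniz identity only shows consistency of the single relation $x\delta(y)=\delta(xy)-\delta(x)y$; it does not show that an arbitrary vanishing combination $\sum_j x_j\xi y_j=0$ in $F$ is sent to $0$ in $\hat\M$, nor does it give strong continuity. The paper instead sets $\alpha(x\xi y)=xs(\hat\eta)y$ and settles both points at once through the identity $\alpha(x\xi y)\Omega_\phi=x\xi y\otimes\Omega_\phi$ combined with the (already established) fact that $\Omega_\phi$ separates $\hat\M$ --- so this point, too, feeds back into the separating property. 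Once these two items are in place, the verifications of (a)--(d) are, as you say, routine bookkeeping on generators.
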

\begin{proof}
Let $\H=F\bar\odot L_2(\M,\phi)$ and 
\begin{equation*}
\mathcal F_\M(\H)=L_2(\M,\phi)\oplus\bigoplus_{k=1}^\infty \H^{\otimes_\phi k}.
\end{equation*}
This is an $\M$-$\M$-correspondence with the obvious actions.

By \Cref{lem:all_invariant} there exists $\xi\in F$ such that $V_t\xi=\xi$ for all $t\in \IR$, $\J(\xi\otimes\Omega_\phi)=\xi\otimes\Omega_\phi$ and $\delta_{i\xi}=\delta$. Let $\hat\eta=\xi\otimes\Omega_\phi$. By \Cref{lem:ident_left_bounded} we have $\hat\eta\in L_\infty(\H_\M,\phi)$ and since $\J\hat\eta=\hat\eta$ also $\hat\eta\in L_\infty(_\M\H,\phi)$. Define
\begin{align*}
a(\hat\eta)\Omega_\phi x&=\check \eta x\\
a(\check \eta)\eta&=\hat\eta\otimes_\phi \eta
\end{align*}
for $x\in \M$ and $\eta\in \H^{\otimes_\phi k}$. Since $\hat\eta\in L_\infty(\H_\M,\phi)$, this map extends to a bounded linear operator on $\mathcal F_\M(\H)$, still denoted by $a(\hat\eta)$. Let $s(\hat\eta)=a(\hat\eta)+a(\hat\eta)^\ast$.

Let $\hat \M$ be the von Neumann algebra generated by $s(\hat\eta)$ and the left action of $\M$ on $\mathcal F_\M(\H)$. Since $s(\check \eta)$ is a right module map and $\M$ acts on $\mathcal F_\M(\H)$ from the left by right module maps, the von Neumann algebra $\hat \M$ consists of right module maps. In particular, $b\Omega_\phi\in L_\infty(\mathcal F_\M(\H)_\M,\phi)$ for every $b\in\hat \M$. Thus
\begin{equation*}
E\colon \hat\M\to B(L_2(\M,\phi)),\,b\mapsto L(\Omega_\phi)^\ast b L(\Omega_\phi)=L(\Omega_\phi)^\ast L(b\Omega_\phi)
\end{equation*}
takes values in $\M$. Clearly $E$ is completely positive, so that it is a conditional expectation. Let $\hat\phi=\phi\circ E=\langle\Omega_\phi,\cdot\;\Omega_\phi\rangle_{\mathcal F_\M(\H)}$.

We will show that $\Omega_\phi$ is cyclic and separating for $\hat \M$. First note that $L_2(\M_\phi)=\overline{\M\Omega_\phi}\subset\overline{\hat\M \Omega_\phi}$. Moreover, the linear hull of $\{x\delta(y)\mid x,y\in \M\}$ is by definition strongly dense in $F$, so that the linear hull of $\{x\delta(y)\otimes\Omega_\phi\mid x,y\in \M\}$ is dense in $\H$. Since
\begin{equation*}
x\delta(y)\otimes \Omega_\phi=i(xy\xi-x\xi y)\otimes\Omega_\phi=i(xy a(\check \eta)-x a(\hat\eta)y)\Omega_\phi,
\end{equation*}
it follows that $\H\subset \overline{\hat\M\Omega_\phi}$.

From here on one can proceed by induction. Assume that $\H^{\otimes_\phi k}\subset \overline{\hat \M\Omega_\phi}$. For $x,y\in \M$ and $\eta\in \H^{\otimes_\phi k}$ we have
\begin{equation*}
(x\delta(y)\otimes\Omega_\phi)\otimes_\phi \eta=i(xy a(\hat\eta)-x a(\hat\eta)y)\eta\in \overline{\hat \M\Omega_\phi}.
\end{equation*}
By \Cref{lem:ident_left_bounded} the linear hull of $\{x\delta(y)\otimes\Omega_\phi\mid x,y\in\M\}$ is strongly dense in $L_\infty(\H_\M,\phi)$. By the definition of the relative tensor product, this implies $\H\otimes_\phi\H^{\otimes_\phi k}\subset \overline{\hat\M\Omega_\phi}$.

Finally, as elements with finitely many non-zero entries in the direct sum decomposition are dense in $\mathcal F_\M(\H)$, we conclude $\mathcal F_\M(\H)\subset \overline{\hat \M\Omega_\phi}$. In other words, $\Omega_\phi$ is cyclic for $\hat \M$.

To show that $\Omega_\phi$ is separating for $\hat \M$, we prove that it is cyclic for $\hat \M'$. For that purpose define 
\begin{align*}
b(\hat\eta)x\Omega_\phi&=x\hat\eta\\
b(\hat\eta)\eta&=\eta\otimes_\phi\hat\eta
\end{align*}
for $x\in \M$ and $\eta\in\H^{\otimes\phi k}$. Since $\J\hat\eta=\hat\eta$ and $\hat\eta$ is left-bounded, it is also right bounded and $b(\hat\eta)$ extends to a bounded linear operator on $\mathcal F_\M(\H)$, still denoted by $b(\hat\eta)$. Let $t(\hat\eta)=b(\hat\eta)+b(\hat\eta)^\ast$.

Clearly, $b(\hat\eta)$ commutes with the left action of $\M$ on $\mathcal F_\M(\H)$. We will next show that $s(\hat\eta)$ commutes with $t(\hat\eta)$. First note that
\begin{align*}
\langle b(\hat\eta)^\ast \eta,x\Omega_\phi\rangle_2=\langle \eta,x\hat\eta\rangle_\H=\langle \hat\eta x^\ast,\J\eta\rangle_\H=\langle \Omega_\phi x^\ast,a(\hat\eta)^\ast\J\eta\rangle_\H=\langle J a(\hat\eta)^\ast \J \eta,x\Omega_\phi\rangle_2
\end{align*}
for $x\in \M$ and $\eta\in \H$ and of course $a(\check \eta)^\ast|_{\H}=L(\hat\eta)^\ast$.

For $x\in\M_{\sigma^\phi}$ we have
\begin{align*}
t(\hat\eta)s(\hat\eta)\Omega_\phi x&=t(\hat\eta)\hat\eta x\\
&=\hat\eta x\otimes_\phi\hat\eta+J a(\hat\eta)^\ast\J(\hat\eta x)\\
&=\hat\eta\otimes_\phi \sigma^\phi_{-i/2}(x)\hat\eta+J L(\hat\eta)^\ast  L(x^\ast\hat\eta)J\Omega_\phi\\
&=a(\hat\eta)t(\hat\eta)\sigma^\phi_{-i/2}(x)\Omega_\phi+\Omega_\phi (x^\ast\xi|\xi)\\
&=a(\hat\eta)t(\hat\eta)\Omega_\phi x+\sigma^\phi_{-i/2}((\xi|x\xi))\Omega_\phi\\
&=a(\hat\eta)t(\hat\eta)\Omega_\phi x+(\xi|\sigma^\phi_{-i/2}(x)\xi)\Omega_\phi\\
&=a(\hat\eta)t(\hat\eta)\Omega_\phi x+L(\hat\eta)^\ast \sigma^\phi_{-i/2}(x)\hat\eta\\
&=a(\hat\eta)t(\hat\eta)\Omega_\phi x+a(\hat\eta)^\ast t(\hat\eta)\Omega_\phi x,
\end{align*}
where we used \Cref{lem:mod_group_inner_prod}. The verification for commutation on $\H^{\otimes_\phi k}$ is similar. Therefore, $\hat\M^\prime$ contains $t(\hat\eta)$ and the right action of $\M$ on $\mathcal F_\M(\H)$. Hence $\Omega_\phi$ is cyclic for $\hat\M'$ by an analog proof as for $\hat \M$.

Therefore $E$ and consequently also $\hat\phi$ is faithful. If $b\in\hat \M$, then
\begin{equation*}
\langle \Omega_\phi,s(\hat\eta)b\Omega_\phi\rangle=\langle \hat\eta,b\Omega_\phi\rangle=\langle t(\hat\eta)\Omega_\phi,b\Omega\phi\rangle=\langle \Omega_\phi,t(\hat\eta)b\Omega_\phi\rangle=\langle \Omega_\phi,b t(\hat\eta)\Omega_\phi=\langle\Omega_\phi,b s(\hat\eta)\Omega_\phi\rangle
\end{equation*}
since $t(\hat\eta)\in\hat\M^\prime$. Thus $s(\hat\eta)$ is in the centralizer of $\hat\phi$.

Define a map $\alpha$ from the linear hull of $\{x\xi y\mid x,y\in \M\}\subset F$ to $\hat\M$ by $\alpha(x\xi y)=x s(\hat\eta) y$. Then
\begin{equation*}
\alpha(x\xi y)\Omega_\phi=x s(\hat\eta)y\Omega_\phi=x\xi y\otimes \Omega_\phi.
\end{equation*}
Thus $\alpha$ is well-defined and strongly continuous. It is easy to check that the extension to $F$ satisfies properties (a)--(d).
\end{proof}

\begin{corollary}
Let $\M$ be a von Neumann algebra and $\phi$ a normal faithful state on $\M$. For every uniformly continuous $\phi$-symmetric quantum Markov semigroup $(P_t)$ with carré du champ $\Gamma$ there exists a von Neumann algebra $\hat\M\supset \M$ with a normal faithful conditional expectation $E\colon \hat\M\to \M$ and a self-adjoint element $a$ in the centralizer of $\phi\circ E$ with $E(a)=0$ such that
\begin{equation*}
\Gamma(x,y)=E([x,a]^\ast [y,a]).
\end{equation*}
\end{corollary}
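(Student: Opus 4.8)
The plan is to read the statement off directly from \Cref{thm:Fock_space}, of which it is essentially a reformulation. First I would apply that theorem to the given uniformly continuous $\phi$-symmetric quantum Markov semigroup $(P_t)$, obtaining a von Neumann algebra $\hat\M\supset\M$, a normal faithful conditional expectation $E\colon\hat\M\to\M$, a self-adjoint element $a$ in the centralizer of $\phi\circ E$ with $E(a)=0$, and a bimodule map $\alpha\colon F\to\hat\M$ satisfying properties (a)--(d). The self-adjointness of $a$, its membership in the centralizer of $\phi\circ E$, and the condition $E(a)=0$ are precisely the requirements imposed on $a$ in the corollary, so these need no further verification; it only remains to identify $\Gamma$ with $E([\,\cdot\,,a]^\ast[\,\cdot\,,a])$.

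Next I would recall from the GNS construction described before \Cref{prop:F_unique} that the carré du champ of $(P_t)$ is recovered from the derivation $\delta$ by $(\delta(x)\mid\delta(y))=\Gamma(x,y)$ for all $x,y\in\M$. Applying property (a) of \Cref{thm:Fock_space} with $\zeta_1=\delta(x)$ and $\zeta_2=\delta(y)$ then yields
\begin{equation*}
\Gamma(x,y)=(\delta(x)\mid\delta(y))=E(\alpha(\delta(x))^\ast\alpha(\delta(y))).
\end{equation*}
Finally, property (d) gives $\alpha(\delta(x))=i[x,a]$, so that $\alpha(\delta(x))^\ast\alpha(\delta(y))=(i[x,a])^\ast(i[y,a])=[x,a]^\ast[y,a]$, and therefore $\Gamma(x,y)=E([x,a]^\ast[y,a])$, as claimed.

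There is no real obstacle at the level of this corollary: the argument is a two-line combination of \Cref{thm:Fock_space} (a) and (d) with the identity $(\delta(x)\mid\delta(y))=\Gamma(x,y)$. The substantive work lies upstream — in the Fock-space construction of $\hat\M$ carried out in the proof of \Cref{thm:Fock_space}, and ultimately in \Cref{lem:all_invariant}, which supplies a vector $\xi\in F$ implementing $\delta$ that is simultaneously invariant under the isometry group $(V_t)$ and under $\J$; this is what makes $a=s(\hat\eta)$ land in the centralizer of $\phi\circ E$ with $E(a)=0$.
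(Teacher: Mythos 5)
Your proposal is correct and is exactly the intended argument: the paper states this corollary without proof precisely because it follows immediately from \Cref{thm:Fock_space} (a) and (d) combined with the identity $(\delta(x)\mid\delta(y))=\Gamma(x,y)$ from the GNS construction. Nothing further is needed.
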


Now we come to the announced refinement of the Christensen--Evans theorem for generators of uniformly continuous GNS-symmetric quantum Markov semigroups.

\begin{theorem}\label{thm:CE_symmetric}
Let $\M$ be a von Neumann algebra and $\phi$ a normal faithful state on $\M$. If $(P_t)$ is a uniformly continuous $\phi$-symmetric quantum Markov semigroup with generator $\L$, then there exists a $\phi$-symmetric completely positive map $\Phi\colon\M\to\M$ such that
\begin{equation*}
\L(x)=\frac 1 2(\Phi(1)x+x\Phi(1))-\Phi(x)
\end{equation*}
for all $x\in \M$.

Conversely, every operator of this form generates a uniformly continuous $\phi$-symmetric quantum Markov semigroup.
\end{theorem}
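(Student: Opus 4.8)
The plan for the forward direction is to extract the completely positive map from the dilation constructed in \Cref{thm:Fock_space}. Applying that theorem produces a von Neumann algebra $\hat\M\supset\M$, a normal faithful conditional expectation $E\colon\hat\M\to\M$, a self-adjoint element $a$ in the centralizer of $\hat\phi:=\phi\circ E$ with $E(a)=0$, and the identity $\alpha(\delta(x))=i[x,a]$; in particular $\Gamma(x,y)=E([x,a]^\ast[y,a])$. I would then set
\begin{equation*}
\Phi\colon\M\to\M,\qquad \Phi(x)=2E(axa).
\end{equation*}
Being a composition of the completely positive maps $x\mapsto axa$ and $E$, this $\Phi$ is completely positive, and $\Phi(1)=2E(a^2)\geq 0$. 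For $\phi$-symmetry I would rewrite $\phi(\Phi(x)^\ast y)=2\hat\phi(ax^\ast ay)$ and $\phi(x^\ast\Phi(y))=2\hat\phi(x^\ast aya)$ (using $\phi\circ E=\hat\phi$ and bimodularity of $E$), and observe that these coincide because $a$ lies in the centralizer of $\hat\phi$. The same bookkeeping shows that $\Phi(1)$ is positive and in the centralizer of $\phi$ (centrality being in any case automatic for a $\phi$-symmetric map).

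It then remains to prove $\L(x)=\tfrac12(\Phi(1)x+x\Phi(1))-\Phi(x)=:\tilde\L(x)$, and this is the step I expect to be the real obstacle: the carré du champ by itself does \emph{not} determine the generator, so one has to use $\phi$-symmetry to rigidify $\L$. The observation I would exploit is that applying $\phi$ to the relation $2\Gamma(x,y)=\L(x)^\ast y+x^\ast\L(y)-\L(x^\ast y)$, together with $\phi$-symmetry of $\L$ and the identity $\phi\circ\L=0$ (which itself follows from $\phi$-symmetry and $\L(1)=0$), yields $\phi(x^\ast\L(y))=\phi(\Gamma(x,y))$ for all $x,y\in\M$. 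On the other hand, expanding $\tilde\L$ and using bimodularity of $E$ and once more that $a$ is central for $\hat\phi$, one checks directly that $\phi(x^\ast\tilde\L(y))=\phi(E([x,a]^\ast[y,a]))=\phi(\Gamma(x,y))$ as well. Since $\M\Omega_\phi$ is dense in $L_2(\M,\phi)$ and $\Omega_\phi$ is separating for $\M$, the equality $\phi(x^\ast(\L(y)-\tilde\L(y)))=0$ for all $x$ forces $\L(y)=\tilde\L(y)$. (Equivalently, $\L-\tilde\L$ is a bounded $\phi$-symmetric derivation and hence vanishes; either route works.)

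For the converse I would verify the standard criterion recalled above. Boundedness of $\L$ and the relation $\L(1)=0$ are immediate. Normality of $\L$ reduces to normality of $\Phi$; although $\Phi$ is not assumed unital, $\Phi(1)$ lies in the centralizer of $\phi$, so (assuming $\Phi\neq 0$) the map $\Phi'(x):=\Phi(x)+(\norm{\Phi(1)}1-\Phi(1))^{1/2}\,x\,(\norm{\Phi(1)}1-\Phi(1))^{1/2}$ is again $\phi$-symmetric and completely positive with $\Phi'(1)=\norm{\Phi(1)}1$, hence $\norm{\Phi(1)}^{-1}\Phi'$ is normal by \Cref{lem:GNS_normal}, and subtracting off the manifestly normal map $x\mapsto(\norm{\Phi(1)}1-\Phi(1))^{1/2}\,x\,(\norm{\Phi(1)}1-\Phi(1))^{1/2}$ shows $\Phi$ is normal. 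For conditional negativity, if $\sum_j a_jx_j=0$ then the two multiplier contributions to $\sum_{j,k}x_j^\ast\L(a_j^\ast a_k)x_k$ factor through $\sum_j a_jx_j$ or $\sum_j x_j^\ast a_j^\ast$ and therefore vanish, leaving $-\sum_{j,k}x_j^\ast\Phi(a_j^\ast a_k)x_k\leq 0$ by complete positivity of $\Phi$. Finally $\L$ is $\phi$-symmetric: the $\Phi$-terms match by $\phi$-symmetry of $\Phi$, and the terms $\tfrac12(\Phi(1)x+x\Phi(1))$ match because $\Phi(1)$ is in the centralizer of $\phi$. Hence $\L$ generates a quantum Markov semigroup, uniformly continuous because $\L$ is bounded, and $\phi$-symmetric because $\L$ is.
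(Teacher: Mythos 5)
Your argument is correct, but it takes a genuinely different route from the paper's. The paper proves \Cref{thm:CE_symmetric} directly from \Cref{lem:all_invariant}: with the invariant vector $\xi\in F$ satisfying $V_t\xi=\xi$, $\J(\xi\otimes\Omega_\phi)=\xi\otimes\Omega_\phi$ and $\delta_{i\xi}=\delta$, it sets $\Phi(x)=2(\xi|x\xi)$ and expands $\phi(\L(x)^\ast y)=\phi((\delta(x)|\delta(y)))$ inside the correspondence $F\bar\odot L_2(\M,\phi)$, using the $\J$- and $V_t$-invariance of $\xi$ to move elements of $\M$ across the inner product. You instead route everything through the dilation of \Cref{thm:Fock_space} and set $\Phi(x)=2E(axa)$ --- which is in fact the same map, since $\alpha(\xi)=a$ and $E(\alpha(\zeta_1)^\ast x\,\alpha(\zeta_2))=(\zeta_1|x\zeta_2)$ by bimodularity of $\alpha$ --- so that all the modular bookkeeping collapses into the single fact that $a$ lies in the centralizer of $\phi\circ E$. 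The identity $\phi(x^\ast\L(y))=\phi(\Gamma(x,y))$ you isolate is precisely the (implicit) first line of the paper's computation, so the underlying mechanism --- recover $\L$ from $\Gamma$ by composing with the faithful state and using that $\Omega_\phi$ is separating --- is the same; your version is easier to verify term by term but costs the full strength of \Cref{thm:Fock_space}, which the paper proves as a separate result and deliberately does not use in its own proof of the theorem. Two points are genuinely additional in your write-up and worth keeping: the observation that $\Phi(1)$ lies in the centralizer of $\phi$ for \emph{any} $\phi$-symmetric positive map (take $x=1$, resp.\ $y=1$, in the symmetry relation), which you need for the unitalization trick; and a complete proof of the converse direction (normality of $\Phi$ via the $\phi$-symmetric unitalization $\Phi'$ and \Cref{lem:GNS_normal}, conditional negative definiteness from the factorization of the multiplier terms, and $\phi$-symmetry of $\L$), which the paper's proof omits entirely.
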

\begin{proof}
First assume that we are given a uniformly continuous $\phi$-symmetric quantum Markov semigroup with generator $\L$. We will use the von Neumann $\M$-bimodule with the extra structure constructed in this section. By \Cref{lem:all_invariant} there exists $\xi\in F$ such that $V_t\xi=\xi$, $\J(\xi\otimes\Omega_\phi)=\xi\otimes\Omega_\phi$ and $\delta_{i\xi}=\delta$. By definition of $\delta$ this means
\begin{align*}
\phi(\L(x)^\ast y)&=\phi((\delta(x)|\delta(y)))\\
&=\phi((x\xi|y\xi)+(\xi x|\xi y)-(x\xi|\xi y)-(\xi x|y\xi))\\
&=\langle x\xi\otimes\Omega_\phi,y\xi\otimes\Omega_\phi\rangle+\phi(x^\ast (\xi|\xi)y)-\phi((\xi|x\xi)^\ast y)-\langle \xi x\otimes\Omega_\phi,y\xi\otimes\Omega_\phi\rangle\\
&=\langle \xi\otimes \Omega_\phi y^\ast,\xi\otimes\Omega_\phi x^\ast\rangle+\phi(x^\ast(\xi|\xi)y)-\phi((\xi|x\xi)^\ast y)-\langle \xi\otimes\Omega_\phi y^\ast,\sigma^\phi_{i/2}(x)^\ast\xi\otimes\Omega_\phi\rangle\\
&=\phi((\xi|\xi \sigma^\phi_{-i/2}(x^\ast y)))+\phi(x^\ast(\xi|\xi)y)-\phi((\xi|x\xi)^\ast y)-\phi((\xi|\sigma^\phi_{-i/2}(x^\ast)\xi \sigma^\phi_{-i/2}(y))\\
&=\phi(\sigma^\phi_{-i/2}((\xi|\xi)x^\ast y))+\phi(x^\ast(\xi|\xi)y)-\phi((\xi|x\xi)^\ast y)-\phi(\sigma^\phi_{-i/2}(\xi|x^\ast \xi y))\\
&=\phi((\xi|\xi)x^\ast y+x^\ast (\xi|\xi)y-2(\xi|x\xi)^\ast y)
\end{align*}
for all $x,y\in\M_{\sigma^\phi}$.

Thus
\begin{equation*}
\L(x)=(\xi|\xi)x+x(\xi|\xi)-2(\xi|x\xi).
\end{equation*}
Let $\Phi\colon \M\to\M,\,x\mapsto 2(\xi|x\xi)$. Clearly, this map is completely positive. Moreover, if $x,y\in\M_{\sigma^\phi}$, then
\begin{align*}
\phi(\Phi(x)^\ast y)&=2\phi((x\xi|\xi y))\\
&=2\langle x\xi\otimes\Omega_\phi,\xi y\otimes\Omega_\phi\rangle\\
&=2\langle \sigma^\phi_{i/2}(y)^\ast \xi\otimes\Omega_\phi,\xi\sigma^\phi_{-i/2}(x^\ast)\otimes\Omega_\phi\rangle\\
&=2\phi((\xi|\sigma^\phi_{i/2}(y)\xi\sigma^\phi_{-i/2}(x^\ast)))\\
&=2\phi(\sigma^\phi_{i/2}(x^\ast)\sigma^\phi_{i/2}((\xi|y\xi)))\\
&=\phi(x^\ast\Phi(y)).
\end{align*}
Thus $\Phi$ is $\phi$-symmetric.
\end{proof}

\begin{remark}
Since the map $x\mapsto (\xi|x\xi)$ is $\phi$-symmetric, it commutes with $\sigma^\phi$ by \Cref{prop:mod_group_commute}. Thus
\begin{equation*}
\sigma^\phi_t((\xi|\xi))=(\xi|\sigma^\phi_t(1)\xi)=(\xi|\xi)
\end{equation*}
for all $t\in\IR$, that is, $(\xi|\xi)$ is in the centralizer of $\phi$.
\end{remark}

%

\begin{example}
The simplest class of examples are quantum Markov semigroups with generators of the form $\L=I-\Phi$ with unital completely positive $\Phi$. Evidently, such a quantum Markov semigroup is GNS-symmetric if and only $\Phi$ is. Thus the generator is already in GNS-symmetric Christensen--Evans form.

In this case, the Fock space construction in the proof of \Cref{thm:Fock_space} is essentially that from \cite{Shl99}, just that Shlyakhtenko starts with $\M\bar\otimes\M$ instead of the subspace $\{\sum_j a_j\otimes x_j\mid\sum_j a_jx_j=0\}$ in the definition of $F$, which allows to immediately write $\partial(x)=x\xi-\xi x$ with $\xi=1\otimes 1$.

A special class of quantum Markov semigroups of this form are the generalized depolarizing (or dephasing) semigroups $P_t=e^{-t}I+(1-e^{-t})E$ with a normal conditional expectation $E$, whose generator has the form $\L=I-E$.
\end{example}

\begin{example}
Let $G$ be a finite group and $\ell\colon G\to [0,\infty)$ a conditionally negative definite length function. As discussed in \Cref{ex:groups}, the operators $P_t$ given by $P_t\lambda_g=e^{-t\ell(g)}\lambda_g$ form a $\tau$-symmetric quantum Markov semigroup on the group algebra $L(G)$. Let 
\begin{equation*}
\xi=\frac{i}{\abs{G}}\sum_{g\in G}\delta(\lambda_g)\lambda_{g^{-1}}.
\end{equation*}
A similar calculation as in \Cref{thm:AFD_amenable} shows that $\delta_{i\xi}=\delta$. Moreover,
\begin{equation*}
\J(\xi\otimes\Omega_\phi)=-\frac{i}{\abs{G}}\sum_{g\in G}\lambda_g\delta(\lambda_{g^{-1}})\otimes\Omega_\phi=-\frac{i}{\abs{G}}\sum_{g\in G}(\delta(\lambda_g\lambda_{g^{-1}})-\delta(\lambda_g)\lambda_{g^{-1}})\otimes\Omega_\phi=\xi\otimes\Omega_\phi,
\end{equation*}
where we used that the modular group is trivial since $\tau$ is a trace. Furthermore, the same fact also implies that $(V_t)$ is trivial, so that $V_t\xi=\xi$ is automatic.

A direct calculation shows
\begin{equation*}
(\xi|\lambda_g \xi)=\frac 1{\abs{G}^2}\sum_{h,h'\in G}K(h,h')\lambda_g-\frac 1{\abs{G}}\sum_{h\in G}K(h,g)\lambda_g=\frac 1{\abs{G}^2}\sum_{h,h'\in G}K(h,h')\lambda_g-\frac 1 2\ell(g)\lambda_g
\end{equation*}
with $K(h,h')=\frac 1 2(\ell(h)+\ell(h')-\ell(h^{-1}h'))$.

As shown in the proof of \Cref{thm:CE_symmetric}, 
\begin{equation*}
\L(\lambda_g)=(\xi|\xi)\lambda_g+\lambda_g(\xi|\xi)-2(\xi|\lambda_g\xi).
\end{equation*}
Indeed,
\begin{equation*}
(\xi|\xi)\lambda_g+\lambda_g(\xi|\xi)-2(\xi|\lambda_g\xi)=\frac{2}{\abs{G}^2}\sum_{h,h'}K(h,h')\lambda_g-\frac 2{\abs{G}^2}\sum_{h,h'\in G}K(h,h')\lambda_g+\ell(g)\lambda_g=\ell(g)\lambda_g.
\end{equation*}
\end{example}

\begin{example}\label{ex:graphs}
Let $X$ be a finite set, $m$ a probability measure on $X$ of full support and $(P_t)$ a Markov semigroup on $C(X)$. (Completely) positive maps on $C(X)$ are of the form
\begin{equation*}
\Phi\colon C(X)\to C(X),\,\Phi f(x)=\sum_y Q(x,y)f(y)
\end{equation*}
with $Q(x,y)\geq 0$, and $\Phi$ is (GNS-) symmetric if and only if
\begin{equation*}
Q(x,y)m(x)=Q(y,x)m(y).
\end{equation*}
As a consequence of \Cref{thm:CE_symmetric}, the generators of symmetric Markov semigroups on $C(X)$ are exactly the maps of the form
\begin{equation*}
\L f(x)=\sum_{y}Q(x,y)(f(x)-f(y))
\end{equation*}
with weights $Q$ satisfying the positivity and symmetry condition mentioned above.

Thus we recover the classical fact that generators of symmetric Markov semigroups on $C(X)$ are exactly the graph Laplacians of undirected weighted graphs over $X$, or, equivalently, the generators of reversible Markov processes om $X$.
\end{example}

\section{Extensions of quantum Markov semigroups}

In this section we use our version of the Christensen--Evans theorem for GNS-symmetric quantum Markov semigroups to show that these can be extended from subalgebras to the whole algebra under natural conditions (\Cref{thm:extension}). In particular, this implies that Alicki's theorem is applicable to generators of GNS-symmetric quantum Markov semigroups on arbitrary finite-dimensional von Neumann algebras (\Cref{cor:Alicki}), which answers a question raised in \cite{CM17a}.

\begin{theorem}[Extensions of GNS-symmetric quantum Markov semigroups]\label{thm:extension}
Let $\M\subset \hat\M$ be an inclusion of von Neumann algebras and assume that there exists a normal conditional expectation $E$ from $\hat\M$ onto $\M$. Every uniformly continuous quantum Markov semigroup $(P_t)$ on $\M$ can be extended to a uniformly continuous quantum Markov semigroup $(\hat P_t)$ on $\hat \M$.

Moreover, if $\hat \phi$ is a normal faithful state on $\hat \M$ such that the modular group $\sigma^{\hat\phi}$ leaves $\M$ invariant, $\phi$ is the restriction of $\hat\phi$ to $\M$ and $(P_t)$ is $\phi$-symmetric, then $(\hat P_t)$ can be chosen $\hat \phi$-symmetric.
\end{theorem}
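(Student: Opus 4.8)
The plan is to realize the extended semigroup through an explicit extension of its generator. For the first assertion, let $\L$ be the generator of $(P_t)$ and apply the Christensen--Evans theorem \cite[Theorem 3.1]{CE79} to write $\L(x)=k^\ast x+xk-\Phi(x)$ with $k\in\M$ and $\Phi\colon\M\to\M$ completely positive and normal (normality of $\Phi$ is inherited from that of $\L$ and of $x\mapsto k^\ast x+xk$), where $\Phi(1)=k+k^\ast$. I would then define
\begin{equation*}
\hat\L\colon\hat\M\to\hat\M,\quad\hat\L(x)=k^\ast x+xk-\Phi(E(x)).
\end{equation*}
This map is bounded, normal, and $\hat\L(1)=k^\ast+k-\Phi(E(1))=k^\ast+k-\Phi(1)=0$. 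To see that $\hat\L$ is conditionally negative definite, take $a_1,\dots,a_n,x_1,\dots,x_n\in\hat\M$ with $\sum_j a_jx_j=0$. The two first-order terms vanish on this kernel,
\begin{equation*}
\sum_{j,l}x_j^\ast k^\ast a_j^\ast a_lx_l=\Bigl(\sum_j x_j^\ast k^\ast a_j^\ast\Bigr)\Bigl(\sum_l a_lx_l\Bigr)=0,\qquad\sum_{j,l}x_j^\ast a_j^\ast a_lkx_l=\Bigl(\sum_j x_j^\ast a_j^\ast\Bigr)\Bigl(\sum_l a_lkx_l\Bigr)=0,
\end{equation*}
so that $\sum_{j,l}x_j^\ast\hat\L(a_j^\ast a_l)x_l=-\sum_{j,l}x_j^\ast\Phi(E(a_j^\ast a_l))x_l\leq 0$, the inequality because $\Phi\circ E$ is completely positive and $[a_j^\ast a_l]_{j,l}\geq 0$. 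Hence $\hat\L$ generates a uniformly continuous quantum Markov semigroup $(\hat P_t)$ on $\hat\M$. Since $E$ restricts to the identity on $\M$, we have $\hat\L(\M)\subseteq\M$ and $\hat\L|_\M=\L$, so $\M$ is $(\hat P_t)$-invariant and $\hat P_t|_\M=P_t$ for all $t\geq 0$.

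For the second assertion I would make two choices. Since $\sigma^{\hat\phi}$ leaves $\M$ invariant, Takesaki's theorem \cite[Theorem IX.4.2]{Tak03} provides a unique normal $\hat\phi$-preserving conditional expectation onto $\M$; take $E$ to be this one, and recall also that $\sigma^{\hat\phi}_t|_\M=\sigma^\phi_t$. Since $(P_t)$ is $\phi$-symmetric, \Cref{thm:CE_symmetric} lets us take $\Phi$ to be $\phi$-symmetric (hence hermitian) and $k=\tfrac12\Phi(1)$, which is self-adjoint and in the centralizer of $\phi$, hence in the centralizer of $\hat\phi$. With these choices $\hat\L(x)^\ast=kx^\ast+x^\ast k-\Phi(E(x^\ast))=\hat\L(x^\ast)$, so $\hat\L$ is hermitian, and since a quantum Markov semigroup is GNS-symmetric exactly when its generator is, it suffices to show $\hat\L$ is $\hat\phi$-symmetric. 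The map $x\mapsto kx+xk$ is $\hat\phi$-symmetric because $k$ is in the centralizer of $\hat\phi$. For $\hat\Phi:=\Phi\circ E$, using $\hat\phi\circ E=\hat\phi$, the bimodule property $E(axb)=aE(x)b$ for $a,b\in\M$, and $\phi$-symmetry of $\Phi$, one computes for $x,y\in\hat\M$
\begin{equation*}
\hat\phi(\hat\Phi(x)^\ast y)=\hat\phi(\Phi(E(x^\ast))y)=\phi(\Phi(E(x)^\ast)E(y))=\phi(E(x)^\ast\Phi(E(y)))=\hat\phi(x^\ast\hat\Phi(y)),
\end{equation*}
so $\hat\Phi$, and therefore $\hat\L$, is $\hat\phi$-symmetric. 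Consequently $(\hat P_t)$ is $\hat\phi$-symmetric, completing the proof.

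The step I expect to require the most care is the symmetric case: one must extend using the \emph{$\hat\phi$-preserving} conditional expectation rather than an arbitrary one (which is exactly where the hypothesis $\sigma^{\hat\phi}(\M)=\M$ is used), and the $\hat\phi$-symmetry of $\hat\Phi$ then rests on chaining together the module property of $E$, the invariance $\hat\phi\circ E=\hat\phi$, and the $\phi$-symmetry of $\Phi$ furnished by \Cref{thm:CE_symmetric}. By contrast, the conditional negativity of $\hat\L$ in the first part is a short cancellation argument valid for any normal conditional expectation, since the drift terms automatically vanish on $\{\sum_j a_jx_j=0\}$ and what remains is controlled by complete positivity of $\Phi\circ E$.
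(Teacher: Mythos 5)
Your proposal is correct and follows essentially the same route as the paper: apply the Christensen--Evans theorem (in its GNS-symmetric refinement, \Cref{thm:CE_symmetric}, for the second part) to write $\L(x)=k^\ast x+xk-\Phi(x)$, extend by $\hat\L(x)=k^\ast x+xk-\Phi(E(x))$, and in the symmetric case take $E$ to be the $\hat\phi$-preserving conditional expectation from Takesaki's theorem. The paper leaves the conditional negative definiteness of $\hat\L$ and the $\hat\phi$-symmetry of $\Phi\circ E$ as unstated verifications, which you carry out correctly.
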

\begin{proof}
By the Christensen--Evans theorem \cite[Theorem 3.1]{CE79}, there exists $k\in \M$ and a completely positive map $\Phi\colon \M\to \M$ with $\Phi(1)=k+k^\ast$ such that the generator $\L$ of $(P_t)$ satisfies
\begin{equation*}
\L(x)=k^\ast x+x k-\Phi(x)
\end{equation*}
for all $x\in \M$. Let
\begin{equation*}
\hat\L\colon\hat\M\to\hat\M,\,\hat \L(x)=k^\ast x+x k-\Phi(E(x)).
\end{equation*}
Clearly $\hat\L$ is an extension of $\L$ and again of Christensen--Evans form, hence the generator of a quantum Markov semigroup on $\hat \M$.

In the situation of the second paragraph, $k$ can be chosen in the centralizer of $\sigma^{\phi}$ and $\Phi$ can be chosen $\phi$-symmetric by \Cref{thm:CE_symmetric}. Moreover, $E$ can be chosen $\phi$-symmetric as well by Takesaki's theorem \cite[Theorem IX.4.2]{Tak03}. Then the operator $\hat \L$ defined above generates a uniformly continuous $\phi$-symmetric quantum Markov semigroup.
\end{proof}

\begin{remark}
The extension result for quantum Markov semigroups without symmetry assumptions is an immediate consequence of the original Christensen--Evans theorem and may have been known before, although we could not find it in the literature. In contrast, the existence of an extension preserving GNS-symmetry was raised as a question in \cite[Remark A.12]{CM17a}. This construction of GNS-symmetric extensions might also prove useful to verify the geometric Ricci condition from \cite[Section 3]{LJL20}, \cite[Definition 3.23]{BGJ22}.
\end{remark}

\begin{example}
Let $m$ be a probability measure of full support on $\{1,\dots,n\}$ and let $(P_t)$ be a symmetric Markov semigroup on $\ell_\infty(\{1,\dots,n\})$. By \Cref{ex:graphs} the generator $\L$ of $(P_t)$ is of the form
\begin{equation*}
\L f(j)=\sum_j Q(j,k)(f(j)-f(k))
\end{equation*}
with $Q\colon \{1,\dots,n\}\times \{1,\dots,n\}\to [0,\infty)$ and $Q(j,k)m(j)=Q(k,j)m(k)$.

View $\ell_\infty(\{1,\dots,n\})$ as algebra of diagonal matrices inside $M_n(\IC)$ and define a faithful state on $M_n(\IC)$ by $\phi(A)=\sum_{j=1}^n A_{jj}m(j)$. Let $B=\sum_{j,k=1}^n Q(j,k)E_{jj}$, where $E_{jl}$ denotes the usual matrix units. Then the extension of $(P_t)$ to a $\phi$-symmetric quantum Markov semigroup on $M_n(\IC)$ provided by \Cref{thm:extension} has generator $\hat\L$ given by
\begin{equation*}
\hat \L(A)=\sum_{j,k=1}^n Q(j,k)\left(\frac{E_{jj}A+AE_{jj}}2-A_{kk}E_{jj}\right).
\end{equation*}
\end{example}

Now we come to the announced application to Alicki's theorem. A density matrix is a positive element $\sigma\in M_n(\IC)$ with trace $1$. If $\M\subset M_n(\IC)$ is a unital $\ast$-subalgebra, every state on $\M$ is of the form $\phi=\mathrm{tr}(\,\cdot\,\sigma)$ for some density matrix $\sigma\in \M$. The state $\phi$ is faithful if and only if $\sigma$ has full rank.

Let $\sigma\in\M$ be a full-rank density matrix. A QMS $(P_t)$ on $\M$ is said to satisfy the $\sigma$-detailed balance condition (DBC) if it is GNS-symmetric with respect to the state $\mathrm{tr}(\,\cdot\,\sigma)$.

\begin{corollary}\label{cor:Alicki}
Let $\M\subset M_n(\IC)$ be a unital $\ast$-subalgebra and $\sigma\in \M$ a full-rank density matrix. If $(P_t)$ is a quantum Markov semigroup on $\M$ satisfying the $\sigma$-DBC, then it can be extended to a quantum Markov semigroup $(\hat P_t)$ on $M_n(\IC)$ satisfying the $\sigma$-DBC.

In particular, there exists a finite set $\mathcal J$, real numbers $\omega_j$, positive numbers $c_j$ and elements $v_j\in M_n(\IC)$ for $j\in \mathcal J$ with
\begin{enumerate}[(i)]
\item $\mathrm{tr}(v_j)=0$ for all $j\in\mathcal J$,
\item $\mathrm{tr}(v_j^\ast v_k)=\delta_{jk}$ for all $j,k\in \mathcal J$,
\item $\{v_j\mid j\in\mathcal J\}=\{v_j^\ast\mid j\in \mathcal J\}$,
\item $\sigma v_j \sigma^{-1}=e^{-\omega_j}v_j$ for all $j\in\mathcal J$
\end{enumerate}
such that the generator $\L$ of $(P_t)$ satisfies
\begin{equation*}
\L=\sum_{j\in\mathcal J} c_j\left(e^{-\omega_j/2}v_j^\ast[v_j,\cdot\,]-e^{\omega_j/2}[v_j^\ast,\cdot\,]v_j\right).
\end{equation*}
\end{corollary}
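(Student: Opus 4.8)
The first assertion—that $(P_t)$ extends to a $\sigma$-DBC quantum Markov semigroup on $M_n(\IC)$—is the specialization of \Cref{thm:extension} to the inclusion $\M\subset M_n(\IC)$. Here the reference state is $\hat\phi=\mathrm{tr}(\,\cdot\,\sigma)$ on $M_n(\IC)$; since $\sigma\in\M$ one checks directly that the modular group $\sigma^{\hat\phi}_t(x)=\sigma^{it}x\sigma^{-it}$ leaves $\M$ invariant (as $\M$ is a $\ast$-subalgebra containing $\sigma$, hence $\sigma^{it}$), and its restriction to $\M$ is the modular group of $\phi$. So \Cref{thm:extension} applies and yields a uniformly continuous $\hat\phi$-symmetric $(\hat P_t)$ on $M_n(\IC)$, i.e.\ one satisfying the $\sigma$-DBC. (Uniform continuity is automatic in finite dimensions, so the hypothesis of \Cref{thm:extension} is vacuous here.)

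For the second assertion I would apply \Cref{thm:CE_symmetric} to the extended semigroup on $M_n(\IC)$: its generator $\hat\L$ has the form $\hat\L(x)=\tfrac12(\Phi(1)x+x\Phi(1))-\Phi(x)$ with $\Phi$ a $\hat\phi$-symmetric completely positive map on $M_n(\IC)$, and $\Phi(1)$ in the centralizer of $\hat\phi$. Restricting to $\M$ recovers $\L$. Now the task is to diagonalize this data. A $\hat\phi$-symmetric completely positive $\Phi$ on $M_n(\IC)$ can be written $\Phi(x)=\sum_i w_i^\ast x w_i$; the $\hat\phi$-symmetry $\mathrm{tr}(\Phi(x)^\ast y\sigma)=\mathrm{tr}(x^\ast\Phi(y)\sigma)$ translates into a linear self-adjointness condition on the operator $\Phi$ acting on the Hilbert space $(M_n(\IC),\langle a,b\rangle_\sigma:=\mathrm{tr}(a^\ast b\sigma))$ — more precisely $\Phi$ commutes with the modular operator $\Delta(x)=\sigma x\sigma^{-1}$ (by \Cref{prop:mod_group_commute}) and is self-adjoint for $\langle\cdot,\cdot\rangle_{1/2}$, the GNS inner product symmetrized by $\Delta^{1/4}$. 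Because $\Phi$ commutes with $\Delta$, one can simultaneously decompose $M_n(\IC)$ into eigenspaces of $\mathrm{ad}_\sigma$; choosing a basis $\{v_j\}$ of eigenvectors of $\Delta$ with $\sigma v_j\sigma^{-1}=e^{-\omega_j}v_j$ that is orthonormal for $\mathrm{tr}(v_j^\ast v_k)=\delta_{jk}$, traceless (using $\Phi(1)$ central and subtracting scalars — equivalently working on $\ker(\mathrm{tr})$, which is invariant), and closed under $\ast$ (since $\Delta$ and the involution are compatible: $(\,\cdot\,)^\ast$ sends the $e^{-\omega}$-eigenspace to the $e^{\omega}$-eigenspace), one rewrites $\Phi$ and hence $\hat\L$ in the stated Lindblad form. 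The coefficients $c_j>0$ come from the positive spectral data of $\Phi$ restricted to each weight space, and the asymmetric factors $e^{\mp\omega_j/2}$ are exactly the symmetrization needed for the generator to be self-adjoint for the KMS/GNS inner product: this is a direct computation showing $\sum_j c_j(e^{-\omega_j/2}v_j^\ast[v_j,x]-e^{\omega_j/2}[v_j^\ast,x]v_j)$ has the Christensen--Evans shape with the correct $\Phi$.

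**Main obstacle.** The diagonalization itself is essentially Alicki's theorem (or its matrix-algebra proof, as in \cite{Ali76,CM17a}), so strictly speaking the corollary's second half is just \Cref{thm:CE_symmetric} followed by citing that known structure result for $M_n(\IC)$. The only genuine content here is the first paragraph — verifying the modular-invariance hypothesis of \Cref{thm:extension} — which is routine once one notes $\sigma\in\M$. Thus I expect no real difficulty; the "proof" is a short assembly: extend via \Cref{thm:extension}, apply \Cref{thm:CE_symmetric} on $M_n(\IC)$, and invoke the standard spectral decomposition of a modular-covariant $\phi$-symmetric completely positive map to read off $\mathcal J$, $\omega_j$, $c_j$, $v_j$ with properties (i)--(iv). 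If a self-contained argument is wanted rather than a citation, the one subtle point is ensuring the eigenbasis $\{v_j\}$ can be chosen simultaneously orthonormal, traceless, $\ast$-closed, and weight-homogeneous; this works because $\mathrm{ad}_\sigma$ is diagonalizable with real weights, its eigenspaces are mutually $\mathrm{tr}$-orthogonal, the involution permutes them, and $\mathbb{C}1$ sits in the zero-weight space, so one diagonalizes $\Phi$ within each weight space by a unitary commuting with the involution on self-dual pairs of spaces.
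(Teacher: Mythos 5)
Your proposal is correct and follows essentially the same route as the paper: the extension is obtained by specializing \Cref{thm:extension} (the paper notes the conditional expectation always exists in finite dimensions, and your additional check that $\sigma^{\hat\phi}$ leaves $\M$ invariant because $\sigma\in\M$ is exactly the hypothesis one needs to verify), and the Lindblad form is then read off from Alicki's theorem applied to the extended semigroup. The paper simply cites Alicki's theorem at this point rather than re-deriving the spectral decomposition of the modular-covariant completely positive map, but the content is the same.
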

\begin{proof}
The extension part is a special case of \Cref{thm:extension}, where the assumption on the existence of a conditional expectation is always satisfied in finite dimensions. The representation of the generator follows then from Alicki's theorem \cite[Theorem 3]{Ali76}, \cite[Theorem 3.1]{CM17a}.
\end{proof}

\appendix
\section{AFD von Neumann algebras are amenable}

The hard part of the proof of the Christensen--Evans theorem is to show that every bounded derivation on a von Neumann algebra with values in a von Neumann bimodule is inner \cite[Theorem 2.1]{CE79}, and we also use this result in our proof of \Cref{thm:CE_symmetric}. In the case when the von Neumann algebra is finite-dimensional, which is of special interest to us for the application to Alicki's theorem, we can give a considerably simpler proof of this result.

In fact, this proof covers not only finite-dimensional von Neumann algebras, but more generally approximately finite-dimensional ones, and not only derivations with values in a von Neumann bimodule, but more generally derivations with values in a dual Banach bimodule. Let us summarize the relevant terminology.

If $A$ is a Banach algebra, a \emph{Banach $A$-bimodule} is an $A$-bimodule $F$ with a Banach norm such that
\begin{equation*}
\norm{a\xi b}\leq \norm{a}\norm{\xi}\norm{b}
\end{equation*}
for all $a,b\in A$ and $\xi\in F$.

If $F$ is the dual of a Banach space $F_\ast$ and for every $a\in A$ the maps $\xi\mapsto a\xi$ and $\xi\mapsto\xi a$ are weak$^\ast$ continuous, then $F$ is called a \emph{dual} Banach $A$-bimodule. By \cite[Proposition 3.8]{Pas73}, \cite[Theorem 3.2.11]{Ske01}, every von Neumann bimodule is a dual Banach bimodule.

If $\M$ is a von Neumann algebra and $F$ a dual Banach $\M$-bimodule, a bounded derivation $\delta\colon \M\to F$ is called \emph{normal} if for every $\omega\in F_\ast$ the map $a\mapsto \omega(\delta(a))$ is weak$^\ast$ continuous.

A von Neumann algebra $\M$ is called \emph{amenable} if every derivation on $\M$ with values in a dual Banach $\M$-bimodule is inner. A von Neumann algebra with separable predual is called \emph{approximately finite-dimensional} if it is the weak closure of an increasing sequence of finite-dimensional von Neumann algebras.

The following result is originally due to Kadison and Ringrose \cite{KR71a,KR71b}, but the proof given here seems simpler and only uses approximate finite-dimensionality directly.
\begin{theorem}\label{thm:AFD_amenable}
Every approximately finite-dimensional von Neumann algebra with separable predual is ame\-nable.
\end{theorem}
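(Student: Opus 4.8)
The plan is to use approximate finite-dimensionality to reduce amenability to the finite-dimensional case, where it is elementary. Write $\M=\overline{\bigcup_n \M_n}$ for the weak$^\ast$ closure of an increasing sequence $(\M_n)_n$ of finite-dimensional von Neumann subalgebras, and let $\delta\colon\M\to F$ be a bounded derivation into a dual Banach $\M$-bimodule $F$. By a standard reduction one may assume that $\delta$ is normal and that $F$ is a \emph{normal} dual bimodule, meaning that for every fixed $\xi\in F$ the maps $a\mapsto a\xi$ and $a\mapsto\xi a$ are weak$^\ast$ continuous; this step---passing from an arbitrary derivation to its normal part, which is again a derivation and is inner if and only if the original one is---is the one place where genuine care is needed, everything that follows being soft.

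The finite-dimensional case is the heart of the matter. If $B$ is a finite-dimensional von Neumann algebra with compact unitary group $U(B)$ carrying normalized Haar measure $du$, then for any bounded derivation $\delta\colon B\to F$ the Bochner integral
\[
\xi=\int_{U(B)} u\,\delta(u^\ast)\,du
\]
satisfies $\delta(v)=v\xi-\xi v$ for every unitary $v\in B$, as one sees from the Leibniz rule combined with the substitution $u\mapsto vu$ inside the integral. Since $B$ is the linear span of its unitaries, $\delta=\delta_\xi$ on all of $B$, and $\norm{\xi}\le\sup_{u}\norm{u\,\delta(u^\ast)}\le\norm{\delta}$.

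Applying this to each $\M_n$ yields $\xi_n\in F$ with $\norm{\xi_n}\le\norm{\delta}$ and $\delta(a)=a\xi_n-\xi_n a$ for all $a\in\M_n$. The closed ball of radius $\norm{\delta}$ in $F=(F_\ast)^\ast$ is weak$^\ast$ compact by Banach--Alaoglu, so $(\xi_n)_n$ has a weak$^\ast$ cluster point $\xi$. For fixed $a\in\M_m$ the identity $a\xi_n-\xi_n a=\delta(a)$ holds for all $n\ge m$, and since $\eta\mapsto a\eta-\eta a$ is weak$^\ast$ continuous and the weak$^\ast$ topology on $F$ is Hausdorff, $a\xi-\xi a$---being a cluster point of the eventually constant sequence $(\delta(a))_{n\ge m}$---equals $\delta(a)$. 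Hence $\delta$ and $\delta_\xi$ agree on the $\sigma$-weakly dense $\ast$-subalgebra $\bigcup_n\M_n$. Finally, normality of $F$ makes $\delta_\xi$ normal, so $\delta-\delta_\xi$ is a normal bounded map vanishing on a $\sigma$-weakly dense subspace of $\M$; pairing with any $\omega\in F_\ast$ produces an element of $\M_\ast$ that vanishes on a $\sigma$-weakly dense set and is therefore zero. Thus $\delta=\delta_\xi$ is inner, which completes the proof.
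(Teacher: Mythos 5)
Your proposal is correct and follows essentially the same route as the paper: average $u\,\delta(u^\ast)$ over the unitary group of each finite-dimensional subalgebra, extract a weak$^\ast$ cluster point via Banach--Alaoglu, and pass to all of $\M$ by weak$^\ast$ density and normality. The only cosmetic differences are that you make the uniform bound $\norm{\xi_n}\le\norm{\delta}$ explicit and invoke the standard reduction to normal derivations on normal dual bimodules up front, where the paper simply assumes normality of $\delta$ and uses Kaplansky density for the unitary groups in the final step.
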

\begin{proof}
Let $\M$ be an approximately finite-dimensional von Neumann algebra and $(\M_n)$ an increasing sequence of finite-dimensional von Neumann subalgebras of $\M$ whose union is weakly dense in $\M$. Let $\mathcal U(\M)$ and $\mathcal U(\M_n)$ denote be the group of unitary elements of $\M$ and $\M_n$, respectively

Let $F$ be a dual Banach $\M$-bimodule and $\delta\colon\M\to F$ a normal derivation. As $\delta$ is bounded, the sequence $(\xi_n)$ with
\begin{equation*}
\xi_n=\int_{\mathcal U(\mathcal M_n)}u^\ast\delta(u)\,du
\end{equation*}
is bounded in $F$. Here the integration is with respect to the normalized Haar measure on $\mathcal U(\M_n)$. By the Banach-Alaoglu theorem there exists a weak$^\ast$ accumulation point $\xi\in F$.

If $v\in \mathcal U(\mathcal M_n)$ and $m\geq n$, then 
\begin{align*}
v\xi_m-\xi_m v&=\int_{\mathcal U(\M_m)}(vu^\ast\delta(u)-u^\ast \delta(u)v)\,du\\
&=\int_{\mathcal U(\M_m)}(vu^\ast\delta(u)+u^\ast u\delta(v)-u^\ast\delta(uv))\,du\\
&=\delta(v)+\int_{\mathcal U(\M_m)}vu^\ast\delta(u)\,du+\int_{\mathcal U(\M_m)}u^\ast \delta(uv)\,du\\
&=\delta(v),
\end{align*}
where we used the substitution $u\mapsto uv^\ast$ for the third summand in the last step. Hence $v\xi-\xi v=\delta(v)$.

By \cite[Theorem II.4.11]{Tak02}, $\mathcal U(\M)$ is the strong$^\ast$ closure of $\bigcup_n \mathcal U(\M_n)$. As $\delta$ is normal, this implies the identity $\delta(v)=v\xi-\xi v$ for all $v\in\mathcal U(\mathcal M)$. Finally, as $\mathcal U(\M)$ spans $\M$, the same equality holds for arbitrary $x\in \M$ instead of $v$.
\end{proof}
\begin{remark}
Of course, if $\M$ itself is finite-dimensional, there is no need for approximation and one can directly define $\xi=\int_{\mathcal U(\M)}u^\ast\delta(u)\,du$.
\end{remark}

\printbibliography
\end{document}